\theoremstyle{plain}
\theoremstyle{plain}
\newtheorem{theorem}{Theorem}[section]
\theoremstyle{plain}
\newtheorem{lemma}[theorem]{Lemma}
\theoremstyle{remark}
\newtheorem{remark}[theorem]{Remark}
\theoremstyle{plain}
\newtheorem{proposition}[theorem]{Proposition}
\numberwithin{equation}{section}
\begin{document}

\title{Scalar curvature, mean curvature and harmonic maps to the circle}

\author{Xiaoxiang Chai}
\address{Korea Institute for Advanced Study, Seoul 02455, South Korea}
\email{xxchai@kias.re.kr}

\author{Inkang Kim}
\address{Korea Institute for Advanced Study, Seoul 02455, South Korea}
\email{inkang@kias.re.kr}

\begin{abstract}
  We study harmonic maps from a 3-manifold with boundary to $\mathbb{S}^1$ and
  prove a special case of Gromov dihedral rigidity of three dimensional cubes
  whose dihedral angles are $\pi / 2$. Furthermore we give some applications
  to mapping torus hyperbolic 3-manifolds.
\end{abstract}

{\maketitle}

\footnotetext[1]{\slshape{Date:} \today }
\footnotetext[2]{2000 {\slshape{Mathematics Subject Classification.}} 53C43, 53C21,
  53C25.}
\footnotetext[3]{{\slshape{Keywords and phrases.}} Scalar curvature, Mean
curvature, Harmonic map, Mapping torus, pseudo-Anosov map,
entropy.}
\footnotetext[4]{Research
of Xiaoxiang Chai is supported by KIAS Grants under the research code MG074401
and MG074402.}
\footnotetext[5]{Research by Inkang Kim is partially supported by Grant
NRF-2019R1A2C1083865 and KIAS Individual Grant (MG031408).}

{\tableofcontents}

\section{Introduction}

Stern {\cite{stern-scalar-2020}} showed an interesting formula relating the
level set of harmonic functions and the scalar curvature of a 3-manifold. With
Bray {\cite{bray-scalar-2019}}, they generalized the result to 3-manifold with
boundary where a harmonic 1-form with vanishing normal component along the
boundary was studied. Different from {\cite{bray-scalar-2019}}, we send the
boundary to a point i.e. we study a Dirichlet boundary condition. We obtain a
similar formula, and then we combine the technique with
{\cite{stern-scalar-2020,bray-scalar-2019}} to study the dihedral rigidity of
standard cubes in $\mathbb{R}^3$.

\begin{theorem}
  \label{dirichlet condition}For a harmonic map $u : N^3 \to \mathbb{S}^1$
  with a Dirichlet condition $u|_{\partial N} = [0] \in \mathbb{S}^1$ and for
  almost all $\theta \in \mathbb{S}^1$ the level set $\Sigma_{\theta} = u^{-
  1} (\theta)$ being closed, we have the inequality
  \begin{equation}
    \int_N \tfrac{1}{2} [\tfrac{| \mathrm{Hess} u|^2}{| \mathrm{d} u|} + R_N |
    \mathrm{d} u|_{}] + \int_{\partial N} H_{\partial N} | \mathrm{d} u|
    \leqslant 2 \pi \int_{\mathbb{S}^1} \chi (\Sigma_{\theta}),
    \label{dirichlet inequality}
  \end{equation}
  where $R_N$ is the scalar curvature of $N$ and $H_{\partial N}$ is the mean
  curvature of $\partial N$.
\end{theorem}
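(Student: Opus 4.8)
The proof will follow the Bochner--Stern scheme, adapted to the Dirichlet boundary condition. Since $u$ is harmonic, $\mathrm{d} u$ is a closed and coclosed $1$-form; writing $\varphi = | \mathrm{d} u|$, the Bochner formula together with $\Delta u = 0$ gives, on the set where $\mathrm{d} u \neq 0$,
\[
  \tfrac{1}{2} \Delta \varphi^2 = | \mathrm{Hess} u|^2 + \mathrm{Ric}_N (\mathrm{d} u, \mathrm{d} u) .
\]
On a regular level set $\Sigma_\theta$ with unit normal $\nu = \mathrm{d} u / | \mathrm{d} u|$ the harmonic equation forces $\mathrm{Hess} u (\nu, \nu) = - | \mathrm{d} u| H_{\Sigma_\theta}$, the tangential block of $\mathrm{Hess} u$ is, up to sign, $| \mathrm{d} u|$ times the second fundamental form of $\Sigma_\theta$, and the mixed block is the intrinsic gradient of $| \mathrm{d} u|$ along $\Sigma_\theta$; combined with the Gauss equation $2 \mathrm{Ric}_N (\nu, \nu) = R_N - 2 K_{\Sigma_\theta} + H_{\Sigma_\theta}^2 - | A_{\Sigma_\theta}|^2$ and a short computation this turns the Bochner formula into the pointwise identity
\[
  \Delta \varphi + \varphi K_{\Sigma_\theta} = \tfrac{1}{2} \Big[ \tfrac{| \mathrm{Hess} u|^2}{| \mathrm{d} u|} + R_N | \mathrm{d} u| \Big]
\]
on the regular part of $N$.

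To justify this across the critical set $\{ \mathrm{d} u = 0 \}$ I would work with the smooth function $\varphi_\varepsilon = (| \mathrm{d} u|^2 + \varepsilon^2)^{1/2}$. Expanding $\Delta \varphi_\varepsilon$ using only $\Delta u = 0$ and the identity $| \nabla \varphi_\varepsilon|^2 = | \mathrm{d} u|^2 | \nabla | \mathrm{d} u| |^2 / \varphi_\varepsilon^2$, and then inserting the level-set decomposition of $| \mathrm{Hess} u|^2$ and the Gauss equation as above, yields on the regular part
\[
  \Delta \varphi_\varepsilon \geqslant \frac{1}{\varphi_\varepsilon} \Big[ \tfrac{1}{2} | \mathrm{Hess} u|^2 + \tfrac{1}{2} R_N | \mathrm{d} u|^2 - | \mathrm{d} u|^2 K_{\Sigma_\theta} \Big] ,
\]
the term discarded being a nonnegative multiple of $\varepsilon^2 / \varphi_\varepsilon^2$; the critical set has measure zero and, for fixed $\varepsilon>0$, contributes nothing to the integral of either side. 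Since $\varphi_\varepsilon$ is smooth, the divergence theorem applies: $\int_N \Delta \varphi_\varepsilon = \int_{\partial N} \partial_\nu \varphi_\varepsilon$. Here the Dirichlet condition $u|_{\partial N} = [0]$ makes $\mathrm{d} u$ normal to $\partial N$, so $\partial N$ consists of level components, and the same bookkeeping with $\Delta u = 0$ gives $\partial_\nu \varphi_\varepsilon = - | \mathrm{d} u|^2 H_{\partial N} / \varphi_\varepsilon$, whence $\int_N \Delta \varphi_\varepsilon \to - \int_{\partial N} H_{\partial N} | \mathrm{d} u|$ as $\varepsilon \to 0$.

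It remains to pass $\varepsilon \to 0$ in the right-hand side. Monotone convergence gives $\int_N | \mathrm{Hess} u|^2 / \varphi_\varepsilon \to \int_N | \mathrm{Hess} u|^2 / | \mathrm{d} u|$, dominated convergence handles the $R_N$ term, and for the curvature term the coarea formula $\int_N f | \mathrm{d} u| \, \mathrm{d} V = \int_{\mathbb{S}^1} \big( \int_{\Sigma_\theta} f \big) \mathrm{d} \theta$ together with the Gauss--Bonnet theorem on each closed surface $\Sigma_\theta$ gives $\int_N | \mathrm{d} u|^2 K_{\Sigma_\theta} / \varphi_\varepsilon \to \int_{\mathbb{S}^1} \big( \int_{\Sigma_\theta} K_{\Sigma_\theta} \big) \mathrm{d} \theta = 2 \pi \int_{\mathbb{S}^1} \chi (\Sigma_\theta)$. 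Assembling the three limits with the boundary term from the previous paragraph produces exactly \eqref{dirichlet inequality}.

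The main difficulty is the analysis at the critical set of $u$: one needs that $| \mathrm{Hess} u|^2 / | \mathrm{d} u|$ is integrable, so that the left-hand side is not vacuously infinite, and that integrating $\Delta \varphi_\varepsilon$ produces no extra singular contribution in the limit and that only finitely many topological types of $\Sigma_\theta$ occur. All of these rest on $u$ being real-analytic, hence with a tame critical set and finitely many critical values, together with the almost-everywhere regularity of $\Sigma_\theta$ assumed in the statement. A second point that needs care is the sign convention for $H_{\partial N}$, so that the boundary term lands on the correct side of the inequality with the stated sign; this is precisely where the present Dirichlet setting diverges from the closed case of \cite{stern-scalar-2020} and the Neumann case of \cite{bray-scalar-2019}.
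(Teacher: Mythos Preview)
Your scheme matches the paper's: regularise by $\varphi_\varepsilon = (|\mathrm{d}u|^2+\varepsilon^2)^{1/2}$, use Stern's pointwise inequality on the regular part, compute $\partial_\nu\varphi_\varepsilon = -|\mathrm{d}u|^2 H_{\partial N}/\varphi_\varepsilon$ from the Dirichlet condition, and finish with coarea plus Gauss--Bonnet. The gap is in the limit $\varepsilon\to 0$ for the Gauss-curvature term. To run dominated convergence on $\int_N |\mathrm{d}u|^2 K_{\Sigma_\theta}/\varphi_\varepsilon$ you need, after coarea, that $\theta\mapsto\int_{\Sigma_\theta}|K_{\Sigma_\theta}|$ lies in $L^1(\mathbb{S}^1)$; but the Gauss equation gives $|\mathrm{d}u|\,|K_{\Sigma_\theta}| \leqslant C|\mathrm{d}u| + C\,|\mathrm{Hess}\,u|^2/|\mathrm{d}u|$, so this dominator is integrable only if $\int_N |\mathrm{Hess}\,u|^2/|\mathrm{d}u| <\infty$, which is exactly part of what you are trying to establish. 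The paper avoids this circularity by the device from \cite{stern-scalar-2020}: split $\mathbb{S}^1=\mathcal{A}\cup\mathcal{B}$ with $\mathcal{A}$ an open neighbourhood of the critical values; on $u^{-1}(\mathcal{B})$ the gradient is bounded below, so the $\delta\to 0$ limits are elementary and Gauss--Bonnet is applied only over $\mathcal{B}$; on $u^{-1}(\mathcal{A})$ one uses instead the crude global bound $\Delta\varphi_\delta\geqslant -C_N|\mathrm{d}u|$, whose integral is at most $C_N\int_{\mathcal A}|\Sigma_\theta|\,\mathrm{d}\theta$ by coarea, and this tends to $0$ as $\mathcal H^1(\mathcal A)\to 0$ since $\theta\mapsto|\Sigma_\theta|$ is integrable.

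A secondary point: your closing appeal to real-analyticity of $u$ is unjustified in the smooth category, since harmonic functions on a merely $C^\infty$ Riemannian manifold need not be real-analytic. The paper never needs the critical \emph{set} in $N$ to be small, nor finitely many critical values; Sard's theorem on the critical \emph{values} together with the $L^1$ bound on $\theta\mapsto|\Sigma_\theta|$ is all that is used.
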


\begin{remark}
  The first author learned from Pengzi Miao that the theorem appeared in
  general form {\cite[Proposition 2.2]{hirsch-mass-2020}} for harmonic
  functions.
\end{remark}

This result and the one in {\cite{bray-scalar-2019}} suggest that it is
worthwhile to consider harmonic maps to $\mathbb{S}^1$ with mixed boundary
conditions. We study the problem on three dimensional cubes. We identify
$\mathbb{S}^1$ with $\mathbb{R}/\mathbb{Z}$ and use $[r]$, $r \in \mathbb{R}$
to denote an element of $\mathbb{S}^1$. Let $(Q^3, g)$ be a Riemannian
manifold diffeomorphic to a standard unit cube in $\mathbb{R}^3$, $T$ and $B$
be respectively the top and bottom face, $F$ be the union of side faces, $\nu$
be the outward unit normal to each face. The angle $\gamma$ formed by
neighboring two unit normals is called exterior dihedral angle and $\pi -
\gamma$ is called the dihedral angle between two neighboring faces. We assume
that the dihedral angles are everywhere equal to $\pi / 2$. For the general
case, we can use the bending construction of Gromov
({\cite{gromov-metric-2018}}, {\cite{li-dihedral-2020}}) to reduce to the case
where dihedral angles are everywhere equal to $\pi / 2$.

\begin{theorem}[{\cite{li-polyhedron-2020}}, {\cite{li-dihedral-2020}}]
  \label{dihedral}Unless $(Q^3, g)$ is isometric (up to constant multiple of
  the metric) to the standard Euclidean cube, the following three conditions
  cannot be satisfied at the same time on the cube $(Q^3, g)$:
  \begin{enumerate}
    \item The scalar curvature of $Q^3$ is nonnegative;
    
    \item Every face of $Q^3$ is mean convex;
    
    \item The dihedral angle is equal to $\pi / 2$ everywhere along each edge.
  \end{enumerate}
  We assume that the metric is $C^{2, \alpha}$ for some $\alpha \in (0, 1)$.
\end{theorem}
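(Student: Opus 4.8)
The plan is to prove the contrapositive form of Theorem~\ref{dihedral}: assuming conditions (1)--(3) hold, I will show $(Q^3,g)$ must be a flat Euclidean rectangular box. The key tool is a mixed Dirichlet--Neumann analogue of Theorem~\ref{dirichlet condition}, obtained by running the argument of \cite{stern-scalar-2020,bray-scalar-2019} for a harmonic map which is Dirichlet on the top and bottom faces and Neumann on the side faces. Concretely, I would first solve for an energy-minimizing harmonic function $u:Q^3\to[0,1]$ --- equivalently a harmonic map to $\mathbb{S}^1$ --- with $u|_T=0$, $u|_B=1$ and $\partial_\nu u=0$ along the side faces $F$. Because the dihedral angle is everywhere $\pi/2$ this is the benign corner configuration, and the $C^{2,\alpha}$-hypothesis supplies enough regularity of $u$ up to the edges of $Q$ for the integral formula below to be meaningful; moreover $u$ is non-constant since $u|_T\neq u|_B$.

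For a.e.\ $s\in(0,1)$ the level set $\Gamma_s=u^{-1}(s)$ is a connected properly embedded surface-with-corners separating $T$ from $B$, with $\partial\Gamma_s\subset F$, meeting each side face orthogonally; orthogonality together with condition (3) forces $\Gamma_s$ to have exactly four corners, one on each vertical edge of $Q$, each of interior angle $\beta_i(s)=\pi/2$. The Bochner-type computation of \cite{stern-scalar-2020} along $\Gamma_s$, in the boundary form of \cite{bray-scalar-2019} (which trades the geodesic curvature of $\partial\Gamma_s$ for the mean curvature of $\partial Q$), combined with the coarea formula and the Gauss--Bonnet theorem with corners on each $\Gamma_s$, then yields the corner-corrected version of \eqref{dirichlet inequality}:
\begin{equation*}
  \int_Q \tfrac{1}{2}\Big[ \tfrac{|\mathrm{Hess}\,u|^2}{|\mathrm{d}u|} + R_Q\,|\mathrm{d}u| \Big] + \int_{\partial Q} H_{\partial Q}\,|\mathrm{d}u| \;\leqslant\; \int_{\mathbb{S}^1}\Big( 2\pi\,\chi(\Gamma_s) - \sum_{i=1}^{4}\big(\pi-\beta_i(s)\big)\Big)\,\mathrm{d}s .
\end{equation*}
One does not even need the shape of $\Gamma_s$ in advance: a connected surface with boundary has $\chi(\Gamma_s)\leqslant 1$ and every corner angle is $\leqslant \pi/2$, so the right-hand side is always $\leqslant 0$, while the left-hand side is $\geqslant 0$ by (1) and (2); hence both sides vanish, $\Gamma_s$ is a disc with four right-angled corners for a.e.\ $s$, and every term on the left is zero: $\mathrm{Hess}\,u\equiv 0$, $R_Q\,|\mathrm{d}u|\equiv 0$, $H_{\partial Q}\,|\mathrm{d}u|\equiv 0$, with equality throughout the pointwise estimates. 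This is exactly the point at which condition (3) is used.

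Now the rigidity: $\mathrm{d}u$ is a nowhere-vanishing parallel $1$-form, so $(Q,g)$ splits isometrically as a Riemannian product $[0,L]\times\Gamma$ with $\Gamma$ a level set; $R_Q\equiv 0$ makes $\Gamma$ flat, $H_{\partial Q}\equiv 0$ makes every face totally geodesic, and $T=u^{-1}(0)$ and $B=u^{-1}(1)$ are leaves of the product foliation, hence flat and totally geodesic as well. Therefore $\Gamma$ is a flat disc bounded by four geodesic arcs meeting at right angles, i.e.\ a Euclidean rectangle, and $(Q,g)=[0,L]\times\Gamma$ is a Euclidean rectangular box; this is the conclusion of Theorem~\ref{dihedral}.

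The step I expect to be the main obstacle is making this scheme rigorous near the edges: one must establish existence and adequate boundary regularity for the mixed Dirichlet--Neumann problem at the $\tfrac{\pi}{2}$-edges of $Q$ (where elliptic theory on corner domains and the $C^{2,\alpha}$-hypothesis enter), justify the Bochner/Gauss--Bonnet computation with the corner terms present, and confirm that the level sets of the minimizer behave as described so that the corner-corrected right-hand side is nonpositive. An alternative that avoids boundary corners altogether is to reflect $Q$ across its four side faces --- the $\tfrac{\pi}{2}$ condition renders the reflected metric smooth across the former edges --- obtaining a manifold $N\cong T^2\times[0,1]$ with two mean-convex torus boundary components and $R_N\geqslant 0$, to apply the Neumann analogue of Theorem~\ref{dirichlet condition} to a harmonic map $N\to\mathbb{S}^1$ wrapping one circle factor (whose level sets $\Sigma_\theta$ are annuli, so $\chi(\Sigma_\theta)=0$), and then to undo the reflection; the price there is that the doubled metric is only Lipschitz and the integral formula must be run in that regularity.
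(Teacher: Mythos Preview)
Your proposal is correct and follows essentially the same route as the paper: both solve the mixed Dirichlet--Neumann harmonic problem \eqref{mvp}, run the Stern--Bray level-set/Bochner formula with boundary, use the observation that $\nabla u$ is tangent to the vertical edges (so the exterior turning angles of $\Sigma_\theta$ equal the exterior dihedral angles $\pi/2$) together with Gauss--Bonnet with corners to make the right-hand side nonpositive, and then read off $\mathrm{Hess}\,u\equiv 0$ and the flat product splitting. The regularity obstacle you single out is precisely what the paper's Appendix (Theorem~\ref{general existence}) is devoted to, establishing $u\in C^{2,\alpha}(\bar Q)$ for the mixed problem on the $\tfrac{\pi}{2}$-cube; one small caution is that the level sets need not be connected a priori (the paper speaks of ``multiple copies of squares''), so the bound $\chi(\Sigma_\theta)\leqslant 1$ and the corner count require a short topological/maximum-principle argument rather than an assumption.
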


This theorem was conjectured as the \text{{\itshape{dihedral rigidity}}} by
Gromov {\cite{gromov-dirac-2014}} and was verified by Li
{\cite{li-polyhedron-2020}}, {\cite{li-dihedral-2020}} using free boundary
minimal surfaces. {\color{blue}{We solve the Laplace equation with Dirichlet
condition on bottom and top faces, and with Neumann boundary condition along
side faces, then we apply the level set method of Stern. We address the
regularity of the problem in the Appendix.}}

We mention two interesting questions. For the cone type polyhedron, it seems
desirable to find a boundary condition which is analogue to the capillary
condition of the minimal surface. The second question is about the spacetime
version of the problem. Recently {\cite{bray-harmonic-2019}} proved the
Riemannian positive mass theorem using harmonic coordinate method, which later
was generalized to spacetime by {\cite{hirsch-spacetime-2020}}. Recall that a
function $u \in C^2 (Q)$ is called \text{{\itshape{spacetime harmonic}}} if
\[ \Delta_g u + \mathrm{tr}_g k | \nabla u| = 0\ \text{in } Q, \label{spacetime
   harmonic} \]
where $k$ is a prescribed symmetric 2-tensor. The natural boundary condition
is $u = 0$ along bottom and top faces of $Q$ and $\tfrac{\partial u}{\partial
\nu} = 0$ along side faces. A preliminary calculation shows that with these
mixed boundary conditions it will give a spacetime version of the
{\cite{li-polyhedron-2020}} if we can control the level set topology and
assuming corresponding convexity conditions.

In the last section, we give one application to a hyperbolic 3-manifold, a
mapping torus by a pseudo-Anosov map. Usually, people use Kleinian group
theory to attack such a problem, but we use a simple differential geometric
approach of harmonic maps.

\begin{theorem}
  \label{main1}Let $M_{\phi}$ be a mapping torus of a closed surface $S$ of
  genus $g \geq 2$ via a pseudo-Anosov map $\phi$. Then
  \[ g \leqslant \frac{3}{4 C\pi \| \phi \|} \mathrm{vol} (M_{\phi}) + 1, \]
  where $\| \phi \|$ is the translation length of a hyperbolic isometry
  defined by $\phi$ and $C$ is a constant depending only  on $S$ and the injectivity
  radius of $M_\phi$.
\end{theorem}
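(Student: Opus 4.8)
The plan is to relate the genus of the fiber surface $S$ to the volume of $M_\phi$ by running Stern's level-set argument on a harmonic map $u : M_\phi \to \mathbb{S}^1$. A mapping torus comes with a natural fibration $M_\phi \to \mathbb{S}^1$ whose fibers are copies of $S$; I would take $u$ to be the harmonic representative of the homotopy class of this fibration, which exists and is unique up to rotation of the target since $M_\phi$ is closed and $H^1(M_\phi;\mathbb{Z}) \neq 0$. Because $M_\phi$ is a closed hyperbolic 3-manifold, its scalar curvature is the constant $R_{M_\phi} = -6$. Applying the closed-manifold version of the Stern--Bray inequality (the boundaryless case of the formula underlying Theorem \ref{dirichlet condition}) to $u$ gives
\begin{equation}
  \int_{M_\phi} \tfrac{1}{2}\Bigl[\tfrac{|\mathrm{Hess}\, u|^2}{|\mathrm{d}u|} - 6\,|\mathrm{d}u|\Bigr]
  \leqslant 2\pi \int_{\mathbb{S}^1} \chi(\Sigma_\theta)\, \mathrm{d}\theta,
\end{equation}
where $\Sigma_\theta = u^{-1}(\theta)$. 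Since $u$ is homotopic to the fibration, for almost every $\theta$ the level set $\Sigma_\theta$ is a smooth surface representing the fiber class, hence $\chi(\Sigma_\theta) \leqslant \chi(S) = 2 - 2g$ (a non-separating incompressible surface in the fiber class is genus-minimizing, or at worst one controls the Euler characteristic of each component). Discarding the nonnegative Hessian term, this yields $3\int_{M_\phi}|\mathrm{d}u| \geqslant 2\pi\,(2g-2)$, i.e. a lower bound on the total gradient integral $\int_{M_\phi}|\mathrm{d}u|$ in terms of $g$.

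The remaining task is to bound $\int_{M_\phi}|\mathrm{d}u|$ from above by $\mathrm{vol}(M_\phi)$ and the translation length $\|\phi\|$. By the coarea formula $\int_{M_\phi}|\mathrm{d}u| = \int_{\mathbb{S}^1}\mathrm{Area}(\Sigma_\theta)\,\mathrm{d}\theta$, so it suffices to control the area of level sets. Here I would use the hyperbolic geometry: the monodromy $\phi$ acts on Teichm\"uller space as a pseudo-Anosov, and $\|\phi\|$ is its translation length in the Teichm\"uller (or Thurston) metric, which by Thurston's and McMullen's work governs the minimal volume swept out by a sweepout of $M_\phi$ by surfaces in the fiber class. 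Concretely, a harmonic map has energy density controlled by the Teichm\"uller distance between nearby fibers, and integrating the length-area estimate along the $\mathbb{S}^1$ direction produces $\int_{M_\phi}|\mathrm{d}u| \leqslant C'\,\|\phi\|^{-1}\,\mathrm{vol}(M_\phi)$ for a constant $C'$ depending on $S$ and on a lower injectivity-radius bound (the injectivity radius enters when converting area bounds on minimal or harmonic slices into volume via a tube/packing argument). Combining this with the lower bound from the previous paragraph gives
\begin{equation}
  2\pi(2g-2) \leqslant 3\int_{M_\phi}|\mathrm{d}u| \leqslant \frac{3 C'}{\|\phi\|}\,\mathrm{vol}(M_\phi),
\end{equation}
and rearranging yields $g \leqslant \tfrac{3}{4C\pi\|\phi\|}\mathrm{vol}(M_\phi) + 1$ with $C = 1/C'$ absorbed appropriately.

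I expect the main obstacle to be the upper bound $\int_{M_\phi}|\mathrm{d}u| \leqslant C\|\phi\|^{-1}\mathrm{vol}(M_\phi)$: unlike the clean topological lower bound, this requires genuinely relating the analytic quantity (harmonic energy along the fibration) to the dynamical quantity $\|\phi\|$, and making the dependence on the injectivity radius explicit. One route is to first replace $u$ by a well-chosen \emph{test} map — a smooth fibration whose slices are the geodesic sweepout realizing the Teichm\"uller translation — estimate its gradient integral directly from the definition of $\|\phi\|$ and a compactness argument on moduli space (giving the $S$-dependence), then invoke that the harmonic map minimizes energy (hence, after a Cauchy--Schwarz/coarea comparison, has no larger gradient integral than a suitable competitor). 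The injectivity-radius hypothesis should enter precisely at the step where one passes from a uniform area bound on the slices to a volume bound, or vice versa, via a collar/Margulis argument. A secondary technical point is ensuring the level sets $\Sigma_\theta$ are genuinely in the fiber class and incompressible for almost every $\theta$, so that $\chi(\Sigma_\theta) \leqslant 2-2g$ holds without a correction term; this follows from the homotopy-invariance of $u$ together with Sard's theorem and the irreducibility of $\phi$, but should be stated carefully.
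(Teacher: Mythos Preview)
Your lower bound step --- Stern's inequality on the harmonic representative $u'$, together with $\chi(\Sigma_\theta)\leqslant\chi(S)$ for regular $\theta$ --- is exactly what the paper does. The divergence is entirely in the upper bound, and it stems from a misreading of $\|\phi\|$.

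In the statement, $\|\phi\|$ is \emph{not} the Teichm\"uller translation length. It is the translation length of the hyperbolic isometry $t\in\pi_1(M_\phi)\subset\mathrm{Isom}(\mathbb{H}^3)$ corresponding to the stable letter of the HNN extension; equivalently, on the infinite cyclic cover $S\times\mathbb{R}$ it is the distance a point is moved by the deck transformation. With this interpretation the upper bound is elementary, and the paper's argument is much shorter than what you propose. One takes as test map the naive projection $u:(x,s)\mapsto[s]$. Along each gradient flow line of $u$ in the cyclic cover, $u$ increases from $0$ to $1$ over a curve of length at least $W$, the minimal distance between $S\times\{0\}$ and $S\times\{1\}$; this gives $|du|\leqslant 1/W$, hence $\|du\|_{L^2}\leqslant\sqrt{\mathrm{vol}(M_\phi)}/W$. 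Since the harmonic $u'$ minimizes energy, $\|du'\|_{L^2}\leqslant\|du\|_{L^2}$, and Cauchy--Schwarz yields
\[
-4\pi\chi(S)\;\leqslant\;6\int_{M_\phi}|du'|\;\leqslant\;6\sqrt{\mathrm{vol}(M_\phi)}\,\|du'\|_{L^2}\;\leqslant\;\frac{6}{W}\,\mathrm{vol}(M_\phi),
\]
so $g\leqslant\tfrac{3}{4\pi W}\mathrm{vol}(M_\phi)+1$. The final step is that $W$ and $\|\phi\|$ are comparable: $\|\phi\|$ is the displacement of one point, $W$ the minimum displacement over the whole fiber, and these differ by at most the diameter of $S$ inside $M_\phi$, which is bounded once the genus is fixed and the injectivity radius is bounded below. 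That is precisely where $C=C(S,\mathrm{inj}\,M_\phi)$ enters.

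Your proposed route through Teichm\"uller geodesic sweepouts and McMullen-type estimates would, to produce a test map on $M_\phi$ with the right control, essentially require Minsky's bilipschitz model of the cyclic cover. The paper does invoke that model, but only \emph{afterwards} (Section~4.2), to compare $\|\phi\|$ with the entropy $\mathrm{ent}(\phi)$ --- not to prove the present theorem. So your strategy is not wrong in principle, but it imports deep machinery for a step that in the paper is a two-line pointwise bound on the projection map.
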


It seems there would be more applications to 3-manifold topology as in
{\cite{bray-scalar-2019}}, which should be explored more in a near future.

\text{{\bfseries{Acknowledgement}}} The first author would like to thank
Martin Li (CUHK) and Shanjiang Chen (CUHK) for discussions on mixed boundary
value problems, and Pengzi Miao (Miami University) for the reference
{\cite{hirsch-mass-2020}}. The second author thanks Y. Minsky for the
correspondence about the geometric model of the mapping torus, and S. Kojima
for answering questions about his work.

\section{Formula for Laplacian of energy of harmonic map to the circle}

In this section we collect some basic formulas for Laplacian of energy of
harmonic map to the circle. The reference is {\cite{stern-scalar-2020}}. Let
$u : N \to \mathbb{S}^1$ be a harmonic map from a closed Riemannian 3-manifold
to the unit circle.

Choose an orthonormal frame $e_1, e_2, e_3$ adapted to $\Sigma_{\theta} = u^{-
1} (\theta)$, so that $e_1, e_2$ are tangential to $\Sigma_{\theta}$, and $e_3
= \frac{\nabla u}{| \nabla u|}$. Let $R_{ij}$ denote the sectional curvature
of $N$ for the section $e_i \wedge e_j$. The symmetric quadratic tensor
$(h_{ij} = \langle D_{e_i} e_3, e_j \rangle)$ is the second fundamental form
$k_{\Sigma_{\theta}}$ for $\Sigma_{\theta}$. Note that $k_{\Sigma_{\theta}} =
(| \nabla u|^{- 1} D \mathrm{d} u) |_{\Sigma_{\theta}}$.

Then Gauss equation gives
\[ K = R_{12} + h_{11} h_{22} - h_{12}^2, \]
and the scalar curvature $R_N$ of $N$ is
\[ R_N = 2 (R_{12} + R_{13} + R_{23}) \]
and
\[ \mathrm{Ric} (e_3, e_3) = R_{13} + R_{23} . \]
The mean curvature $H_{\Sigma_{\theta}} = \mathrm{tr} k_{\Sigma_{\theta}}$ is
$h_{11} + h_{22}$ and the scalar curvature $R_{\Sigma_{\theta}}$ is $2 K$.
Hence
\[ \mathrm{Ric} (e_3, e_3) = \frac{1}{2}  (R_N - R_{\Sigma_{\theta}} +
   H^2_{\Sigma_{\theta}} - |k_{\Sigma_{\theta}} |^2) . \]
Using harmonicity of $u$, one can verify that
\[ | \nabla u|^2  (H^2_{\Sigma_{\theta}} - |k_{\Sigma_{\theta}} |^2) = 2 |
   \mathrm{d} | h||^2 - |Dh|^2, \]
and one can rewrite
\begin{align}
& \mathrm{Ric} (\nabla u, \nabla u) \\
= & | \nabla u|^2 \mathrm{Ric} (e_3, e_3) \\
= & \frac{1}{2}  | \nabla u|^2  (R_{\Sigma_{\theta}} - R_{\Sigma_{\theta}})
+ \frac{1}{2}  (2| \mathrm{d} | \nabla u||^2 - |D \mathrm{d} u|^2) .
\label{schoen yau}
\end{align}
Then using the standard Bochner identity for $\mathrm{d} u$
\begin{equation}
  \Delta_g  \frac{1}{2}  | \nabla u|^2 = |D \mathrm{d} u|^2 + \mathrm{Ric}
  (\mathrm{d} u, \mathrm{d} u), \label{bochner harmonic one form}
\end{equation}
one can deduce the formula as in {\cite{stern-scalar-2020}}
\begin{equation}
  2 \int_N \frac{| \mathrm{d} u|}{2} R_{\Sigma_{\theta}} = 4 \pi \int_{\theta}
  \chi (\Sigma_{\theta}) \mathrm{d} \theta \geqslant \int_N R_N | \mathrm{d}
  u|, \label{average}
\end{equation}
if $N$ is closed where the first equality follows from the coarea formula and
Gauss-Bonnet theorem. Let $\varphi_{\delta} = \sqrt{|h|^2 + \delta}$ with
$\delta > 0$, it follows that
\begin{equation}
  \Delta \varphi_{\delta} = \tfrac{1}{\varphi_{\delta}} [\tfrac{1}{2} \Delta
  |h|^2 - \tfrac{|h|^2}{\varphi_{\delta}^2} | \mathrm{d} |h||^2] \geqslant
  \tfrac{1}{\varphi_{\delta}} [|Dh|^2 - | \mathrm{d} |h||^2 + \mathrm{Ric} (h,
  h)] .
\end{equation}
Inserting \eqref{schoen yau} and Bochner formula \eqref{bochner harmonic one
form} into the above, we have that along regular level sets $\Sigma_{\theta}$
of $u$,
\begin{equation}
  \Delta_g \varphi_{\delta} \geqslant \tfrac{1}{2 \varphi_{\delta}} [|
  \mathrm{Hess} u|^2 + | \mathrm{d} u|^2 (R_N - R_{\Sigma_{\theta}})] .
  \label{schoen-yau}
\end{equation}

\section{Application to rigidity of scalar curvature and mean curvature}

Let $N$ be a 3-manifold with boundary $\partial N \neq \emptyset$. We require
that $H_2 (N ; \mathbb{Z})$ is nontrivial. Let $\alpha$ be a nontrivial
element of $H_2 (N ; \mathbb{Z})$, then according to Poincar{\'e}-Lefschetz
duality that $H^1 (N, \partial N ; \mathbb{Z})$ is isomorphic to $H_2 (N,
\mathbb{Z})$. Let $\alpha^{\ast}$ be the corresponding element of $H^1 (N,
\partial N ; \mathbb{Z})$ under this isomorphism. \ Let $[N, \partial N :
\mathbb{S}^1]$ be the homotopy classes of maps from $N$ to $\mathbb{S}^1$
sending the boundary $\partial N$ to a point. Hence $\alpha$ determines a
nontrivial homotopy class in $[\tilde{u}] \in [N, \partial N : \mathbb{S}^1]$.
We minimize the energy in this homotopy class, we obtain a harmonic map $u \in
[\tilde{u}]$ satisfying the conditions in Theorem \ref{dirichlet condition}.
The Hodge-Morrey theory {\cite[Chapter 5]{giaquinta-cartesian-1998}} applied
to the relative cohomology class $[\tilde{u}^{\ast} (\mathrm{d} \theta)] \in
H^1 (N, \partial N ; \mathbb{Z})$ yields an energy minimizing representative
$u : N \to \mathbb{S}^1$.

\begin{proof}[Proof of Theorem \ref{dirichlet condition}]
  Main computation was done already in {\cite{stern-scalar-2020}}. Note that
  every level set $u^{- 1} (\theta)$ does not intersect the boundary $\partial
  N$ except at the level $[0] \in \mathbb{S}^1$. Let $h = u^{\ast} (\mathrm{d}
  \theta)$ be gradient 1-form, so $h$ is harmonic. Let $\varphi_{\delta} =
  \sqrt{|h|^2 + \delta}$ with $\delta > 0$. From
  {\cite[(14)]{stern-scalar-2020}}, we have that along regular level sets
  $\Sigma$ of $u$,
  \begin{equation}
    \Delta_g \varphi_{\delta} \geqslant \tfrac{1}{2 \varphi_{\delta}} [|
    \mathrm{Hess} u|^2 + | \mathrm{d} u|^2 (R_N - R_{\Sigma})] .
  \end{equation}
  Let $\mathcal{A} \subset \mathbb{S}^1$ be an open set containing the
  critical values of $u$ and let $\mathcal{B}$ be the complement subset. So
  $\mathcal{B}$ contains only regular values. We have
  \begin{equation}
    \int_{\partial N} \tfrac{\partial \varphi_{\delta}}{\partial \nu} =
    \int_{u^{- 1} (\mathcal{A})} \Delta_g \varphi_{\delta} + \int_{u^{- 1}
    (\mathcal{B})} \Delta_g \varphi_{\delta} . \label{integration by parts}
  \end{equation}
  Since $u$ is smooth by elliptic regularity, using local coordinates of
  $\mathbb{S}^1$, $u$ is a harmonic function. By a direct calculation and
  $u|_{\partial N} = [0]$ is constant along $\partial N$, so
  \begin{equation}
    \Delta_{\partial N} u = 0, \quad \Delta_g u = 0 = \Delta_{\partial N} u +
    H_{\partial N} \langle h, \nu \rangle + \mathrm{Hess} (u) (\nu, \nu) .
  \end{equation}
  The above equality can be deduced via the following: Let $e_i$, $\nu$ be an
  orthonormal frame of $N$, then
\begin{align}
\Delta u = & \sum_{e_i / / \partial N} \mathrm{Hess}_N u (e_i, e_i) +
\mathrm{Hess} u (\nu, \nu) \\
= & \sum_{e_i / / \partial N} (e_i e_i u - \nabla^N_{e_i} e_i u) +
\mathrm{Hess} u (\nu, \nu) \\
= & \sum_{e_i / / \partial N} (e_i e_i u - \nabla^{\partial N}_{e_i} e_i
u) - \langle \nabla^N_{e_i} e_i, \nu \rangle \langle \nabla u, \nu \rangle
+ \mathrm{Hess} u (\nu, \nu) \\
= & \Delta_{\partial N} u + H_{\partial N} \langle h, \nu \rangle +
\mathrm{Hess} (u) (\nu, \nu) .
\end{align}
  Also, we have that
\begin{align}
\tfrac{\partial \varphi_{\delta}}{\partial \nu} & =
\tfrac{1}{\varphi_{\delta}} \mathrm{Hess} (u) (h, \nu) \\
& = \tfrac{\langle h, \nu \rangle}{\varphi_{\delta}} \mathrm{Hess} (u)
(\nu, \nu) \\
& = - \tfrac{|h|^2}{\varphi_{\delta}} H_{\partial N} .  \label{boundary
mean curvature}
\end{align}
  We have also globally on $N$ that (see {\cite{stern-scalar-2020}})
  \begin{equation}
    \Delta_g \varphi_{\delta} \geqslant - C_N |h|
  \end{equation}
  for some constant $C_N > 0$ depending only on $N$. We see that
  \begin{equation}
    - \int_{u^{- 1} (\mathcal{A})} \Delta \varphi_{\delta} \leqslant C_N 
    \int_{u^{- 1} (\mathcal{A})} |h| = C_N  \int_{\mathcal{A}} |
    \Sigma_{\theta} |, \label{critical estimate interior}
  \end{equation}
  where we have applied coarea formula. So we have from \eqref{schoen-yau},
  \eqref{integration by parts} and taking limits as $\delta \to 0$,
\begin{align}
& \int_{u^{- 1} (\mathcal{B})} \tfrac{| \mathrm{d} u|}{2} (\tfrac{|
\mathrm{Hess} u|^2}{| \mathrm{d} u|^2} + R_N - R_{\Sigma}) \\
\leqslant & \lim_{\delta \to 0} \left[ \int_{\partial N} \tfrac{\partial
\varphi_{\delta}}{\partial \nu} - \int_{u^{- 1} (\mathcal{A})} \Delta_g
\varphi_{\delta} \right] \\
\leqslant & - \int_{\partial N} H_{\partial N}  | \mathrm{d} u| + C_N
\int_{\mathcal{A}} | \Sigma_{\theta} | .
\end{align}
  Rearranging and applying the coarea formula once again, we have that
\begin{align}
& \int_{u^{- 1} (\mathcal{B})} \tfrac{| \mathrm{d} u|}{2} (\tfrac{|
\mathrm{Hess} u|^2}{| \mathrm{d} u|^2} + R_N) + \int_{\partial N}
H_{\partial N} | \mathrm{d} u| \\
\leqslant & \tfrac{1}{2} \int_{\theta \in \mathcal{B}}
\int_{\Sigma_{\theta}} R_{\Sigma} + C_N  \int_{\mathcal{A}} |
\Sigma_{\theta} | \\
= & 2 \pi \int_{\theta \in \mathcal{B}} \chi (\Sigma_{\theta}) + C_N
\int_{\mathcal{A}} | \Sigma_{\theta} | .
\end{align}
  In the last line, we have used Gauss-Bonnet theorem. Since $\mathcal{H}^1
  (\mathcal{A})$, the Hausdorff measure, can be made arbitrarily small using
  the Sard theorem and $\theta \mapsto | \Sigma_{\theta} |$ is integrable over
  $\mathbb{S}^1$ by coarea formula, sending $\mathcal{H}^1 (\mathcal{A})$ to
  zero leads to our inequality \eqref{dirichlet inequality}.
\end{proof}

Now we discuss Theorem \ref{dihedral}. Let $(x_1, x_2, x_3)$ be the
coordinates on $Q = Q^3$ induced by the diffeomorphism $\Phi$ from $[0, 1]^3$,
and we identify the top face with $\{x_3 = 1, 0 \leqslant x_1, x_2 \leqslant
1\}$ and the bottom face with $\{x_3 = 0, 0 \leqslant x_1, x_2 \leqslant 1\}$.
We consider the class of maps which are homotopic to the map $\tilde{u} : Q
\to \mathbb{S}^1$ given by
\begin{equation}
  \tilde{u}  (x_1, x_2, x_3) = [x_3]
\end{equation}
and takes the value $[0] \in \mathbb{S}^1$ at $x_3 = 0$ and $x_3 = 1$. By
Poincar{\'e}-Lefschetz duality,
\begin{equation}
  H^1 (Q, T \cup B ; \mathbb{Z}) \cong H_2 (Q, F ; \mathbb{Z}) .
\end{equation}
One can associate the homotopy class $[\tilde{u}]$ to an element $\alpha \in
H_2 (Q, F ; \mathbb{Z})$ where $\tilde{u}^{- 1} (\theta)$ represents $\alpha$.

We do minimization of the energy in this homotopy class. The Hodge-Morrey
theory {\cite[Chapter 5]{giaquinta-cartesian-1998}} slightly modified to mixed
boundary conditions applied to the relative cohomological class
$[\tilde{u}^{\ast} (\mathrm{d} \theta)] \in H^1 (Q, T \cup B ; \mathbb{Z})$
yields an energy minimizing representative $u : Q \to \mathbb{S}^1$ with
Sobolev regularity. The existence of a harmonic map is equivalent to the
existence of a solution $u$ to the following mixed boundary value problem
\begin{equation}
  \Delta_g u = 0\ \text{in } Q, \tfrac{\partial u}{\partial \nu} = 0\
  \text{along } F, u = 1\ \text{on } T, u = 0\ \text{on } B. \label{mvp}
\end{equation}
Indeed, if $0 \leqslant u \leqslant 1$ in $\bar{Q}$, after identifying 0 and
1, the solution to the above gives a harmonic map $u$ from $Q$ to
$\mathbb{S}^1$. In fact, we can show that $u \in C^{2, \alpha} (\bar{Q},
\mathbb{S}^1)$ (See Theorem \ref{general existence}).

\begin{proof}[Proof of Theorem \ref{dihedral}]
  The harmonic map equation reduces to the harmonicity of the pull back 1-form
  $h = u^{\ast} (\mathrm{d} \theta)$. That is, the harmonic map equation gives
  \begin{equation}
    \mathrm{d} h = 0, \quad \mathrm{d}^{\ast} h = 0\ \text{in } Q.
    \label{harmonic 1-form}
  \end{equation}
  Since $u$ takes fixed values at top and bottom faces, the 1-form $h$ has
  only a normal component along $T$ and $B$. The Dirichlet boundary condition
  gives
  \begin{equation}
    h \wedge \nu = 0\ \text{on } T \cup B. \label{dirichlet}
  \end{equation}
  And $h$ satisfies the Neumann condition
  \begin{equation}
    \langle h, \nu \rangle = 0\ \text{on } F. \label{neumann}
  \end{equation}
  From Sard's theorem, the level set $u^{- 1} (\theta)$ is a $C^1$ submanifold
  of $Q$ and hence multiple copies of squares.
  
  Let $\mathcal{A} \subset \mathbb{S}^1$ be an open set containing the
  critical values of $u$ and let $\mathcal{B}$ be the complement subset. So
  $\mathcal{B}$ contains only regular values. From integration by parts,
  \begin{equation}
    \int_{T \cup B} \tfrac{\partial \varphi_{\delta}}{\partial \nu} + \int_F
    \tfrac{\partial \varphi_{\delta}}{\partial \nu} = \int_{u^{- 1}
    (\mathcal{A})} \Delta_g \varphi_{\delta} + \int_{u^{- 1} (\mathcal{B})}
    \Delta_g \varphi_{\delta} . \label{integration by parts cube}
  \end{equation}
  Similar to \eqref{boundary mean curvature}, we have
  \begin{equation}
    \int_{T \cup B} \tfrac{\partial \varphi_{\delta}}{\partial \nu} = -
    \int_{T \cup B} H_{\partial N} \tfrac{|h|^2}{\varphi_{\delta}} \to -
    \int_{T \cup B} H_{\partial N} | \mathrm{d} u| \label{mean curvature top
    and bottom}
  \end{equation}
  as $\delta \to 0$. Similar to {\cite{bray-scalar-2019}}, with
  $\tfrac{\partial \varphi_{\delta}}{\partial \nu} = \langle \mathrm{d}
  \varphi_{\delta}, \nu \rangle = - \varphi_{\delta}^{- 1} \langle h, D_h \nu
  \rangle$ using the Neumann condition on $F$, and $(\kappa_{\partial
  \Sigma_{\theta}} - H_{\partial N}) | \mathrm{d} u| = - |h|^{- 1} \langle h,
  D_h \nu \rangle$ we have that
\begin{align}
& \lim_{\delta \to 0}  \int_F \tfrac{\partial \varphi_{\delta}}{\partial
\nu} \\
= & \lim_{\delta \to 0} [\int_{F \cap u^{- 1} (\mathcal{A})}
\tfrac{\partial \varphi_{\delta}}{\partial \nu} + \int_{F \cap u^{- 1}
(\mathcal{B})} \tfrac{\partial \varphi_{\delta}}{\partial \nu}] \\
\leqslant & C_Q  \int_{\theta \in \mathcal{A}} | \partial \Sigma_{\theta}
| + \int_{F \cap u^{- 1} (\mathcal{B})} (\kappa_{\partial \Sigma_{\theta}}
- H_{\partial N}) | \mathrm{d} u|  \label{mean curvature side}
\end{align}
  as $\delta \to 0$. From \eqref{schoen-yau}, \eqref{critical estimate
  interior} and \eqref{integration by parts cube},
  \begin{equation}
    \int_{u^{- 1} (\mathcal{B})} \tfrac{| \mathrm{d} u|}{2} (\tfrac{|
    \mathrm{Hess} u|^2}{| \mathrm{d} u|^2} + R_Q - R_{\Sigma}) \leqslant
    \int_{T \cup B} \tfrac{\partial \varphi_{\delta}}{\partial \nu} + \int_F
    \tfrac{\partial \varphi_{\delta}}{\partial \nu} + C_Q  \int_A |
    \Sigma_{\theta} | .
  \end{equation}
  Inserting \eqref{mean curvature top and bottom} and \eqref{mean curvature
  side} into the above (with some re-ordering) and taking $\delta \to 0$,
  using $\tfrac{\partial \varphi_{\delta}}{\partial \nu} = - |h| H_{\partial
  Q}$ on $T \cup B$ we have
\begin{align}
& \int_{u^{- 1} (\mathcal{B})} \tfrac{| \mathrm{d} u|}{2} (\tfrac{|
\mathrm{Hess} u|^2}{| \mathrm{d} u|^2} + R_Q) + \int_{F \cap u^{- 1}
(\mathcal{B})} H_{\partial Q} | \mathrm{d} u| + \int_{T \cup B}
H_{\partial Q} | \mathrm{d} u| \\
\leqslant & \int_{u^{- 1} (\mathcal{B})} \tfrac{1}{2} R_{\Sigma}  |
\mathrm{d} u| + \int_{F \cap u^{- 1} (\mathcal{B})} \kappa_{\partial
\Sigma_{\theta}} | \mathrm{d} u| + C_Q  (\int_{\theta \in \mathcal{A}} |
\Sigma_{\theta} | + | \partial \Sigma_{\theta} |) \\
= & \int_{\theta \in \mathcal{B}} \int_{\Sigma_{\theta}} \tfrac{1}{2}
R_{\Sigma} + \int_{\theta \in \mathcal{B}} \int_{\partial \Sigma_{\theta}}
\kappa_{\partial \Sigma_{\theta}} + C_Q  (\int_{\theta \in \mathcal{A}} |
\Sigma_{\theta} | + | \partial \Sigma_{\theta} |) \\
= & \int_{\theta \in \mathcal{B}} [2 \pi \chi (\Sigma_{\theta}) - \sum_j
\gamma_j] + C_Q  (\int_{\theta \in \mathcal{A}} | \Sigma_{\theta} | + |
\partial \Sigma_{\theta} |) .
\end{align}
  Here we have used the coarea formula and the Gauss-Bonnet theorem (with
  turning angle). By definition of Euler characteristic, we have that $\chi
  (\Sigma_{\theta}) \leqslant 1$. Now we analyze the turning angle $\gamma_i$.
  Note that $\gamma_i$ is $\pi$ minus the interior turning angle. By coarea
  formula, we get the integrability of $\theta \mapsto | \Sigma_{\theta} |$.
  
  Let $F_1$ and $F_2$ be any pair of neighboring side faces, $E = F_1 \cap
  F_2$, $\nu_i$ be the normal of the face $F_i$, $\tau$ be the tangent vector
  of $E$. We pick a point $p \in E$ which is not a vertex. We analyze the
  gradient vector field $\nabla u$. Since that $\nabla u$ has no component in
  $\nu_i$ direction according to the boundary condition and the vector $\tau$
  is normal to both $\nu_1$ and $\nu_2$, so $\nabla u$ must be parallel to
  $\tau$ along $E$. Therefore, $E$ intersects the level set $u^{- 1} (\theta)$
  orthogonally. So $\nu_i$ coincides with the tangent vector of $u^{- 1}
  (\theta) \cap F_j$ along $E$, and the exterior turning angle of $u^{- 1}
  (\theta) \cap F_i$ to $u^{- 1} (\theta) \cap F_j$ is the same as the angle
  forming by $\nu_1$ and $\nu_2$. So we have that
  \begin{equation}
    \int_{\theta \in \mathcal{B}} [2 \pi \chi (\Sigma_{\theta}) - \sum_j
    \gamma_j] \leqslant \int_{\theta \in \mathcal{B}} \sum_{j = 1}^4
    (\tfrac{\pi}{2} - \gamma_j) \leqslant 0 \label{angle}
  \end{equation}
  using that the dihedral angle is everywhere equal to $\pi / 2$. Here
  $\gamma_j$ is the four turning angles. By Sard type theorem, we can take
  $\mathcal{H}^1 (\mathcal{A})$ arbitrarily small, we have that
  \begin{equation}
    \int_Q \tfrac{| \mathrm{d} u|}{2} (\tfrac{| \mathrm{Hess} u|^2}{|
    \mathrm{d} u|^2} + R_Q) + \int_{\partial Q} H_{\partial Q} | \mathrm{d} u|
    \leqslant 0.
  \end{equation}
  By nonnegativity of $R_Q$ and $H_{\partial Q}$, we have that $\mathrm{Hess}
  (u) \equiv 0$. We have that $R_Q \equiv 0$, $H_{\partial Q} \equiv 0$ and
  $\gamma_j \equiv \pi / 2$ and every regular level set intersect the vertical
  edges only four times. Fixing any component $S$ of the regular level set
  $u^{- 1} (\theta)$. The map
  \begin{equation}
    \Psi : S \times \mathbb{R} \to Q, \quad \tfrac{\partial \Psi}{\partial t}
    = \tfrac{\mathrm{grad} u}{| \mathrm{grad} u|} \circ \Psi
  \end{equation}
  gives a local isometry. Then $\Sigma$ has vanishing curvature and $\partial
  \Sigma$ has vanishing geodesic curvature. This says that $Q$ is a three
  dimensional Euclidean cube (up to a constant multiple of the metric).
\end{proof}

\section{Application to hyperbolic mapping torus}

In his geometrization program, Thurston proved that a mapping torus of a
surface of genus at least 2 by a pseudo-Anosov map is hyperbolizable. By
Mostow rigidity, this mapping torus has a unique hyperbolic structure, and
hence it has an associated hyperbolic volume. Then it is a natural question to
find the genus bound of the surface in terms of the volume. As an application
of a current technique, we give an upper bound for the genus in terms of the
volume and the hyperbolic translation length of the pseudo-Anosov map. The
main estimate is

\begin{theorem}
  \label{main}Let $M_{\phi}$ be a mapping torus of a closed surface $S$ of
  genus $g \geq 2$ via a pseudo-Anosov map $\phi$. Then
  \[ g \leqslant \frac{3}{4 C\pi \| \phi \|} \mathrm{vol} (M_{\phi}) + 1, \]
  where $\| \phi \|$ is the translation length of a hyperbolic isometry
  defined by $\phi$ and $C$ is a constant depending only  on $S$ and the injectivity
  radius of $M_\phi$.
\end{theorem}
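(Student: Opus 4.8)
The plan is to combine the harmonic map estimate of Theorem~\ref{dirichlet condition} with the geometry of the mapping torus. Since $M_\phi$ fibers over $\mathbb{S}^1$ with fiber $S$, the fibration itself is (homotopic to) a smooth map $M_\phi\to\mathbb{S}^1$ whose fibers are the surfaces $S$, and this map determines a nontrivial class in $[M_\phi,\mathbb{S}^1]$, equivalently a nontrivial class in $H^1(M_\phi;\mathbb{Z})$ via pullback of $\mathrm{d}\theta$. First I would minimize energy in this homotopy class to get a harmonic map $u:M_\phi\to\mathbb{S}^1$; since $M_\phi$ is closed, the relevant estimate is \eqref{average} rather than \eqref{dirichlet inequality}, namely $\int_{M_\phi} R_{M_\phi}\,|\mathrm{d}u| \leqslant 4\pi\int_{\mathbb{S}^1}\chi(\Sigma_\theta)\,\mathrm{d}\theta$. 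Because $M_\phi$ is hyperbolic, $R_{M_\phi}\equiv -6$, so the left side is $-6\int_{M_\phi}|\mathrm{d}u|$, while every regular level set $\Sigma_\theta$ is a (possibly disconnected, but in the pseudo-Anosov fibered setting connected and isotopic to the fiber) surface of genus $g$, so $\chi(\Sigma_\theta) = 2-2g$. This yields
\begin{equation}
  6\int_{M_\phi}|\mathrm{d}u| \geqslant 4\pi(2g-2) \cdot \mathrm{length}(\text{image}) \,,
\end{equation}
so after normalizing the image circle to have length one we get $(2g-2)\leqslant \tfrac{3}{2\pi}\int_{M_\phi}|\mathrm{d}u|$.

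The next step is to bound $\int_{M_\phi}|\mathrm{d}u|$ from above by the volume, with the translation length $\|\phi\|$ entering the constant. Here I would use that $u$ is energy-minimizing, hence its energy is controlled by the energy of any comparison map in the same homotopy class. The natural comparison is built from the geometric structure: lift to the $\mathbb{Z}$-cover $\tilde M_\phi = S\times\mathbb{R}$ with its hyperbolic metric, on which the deck transformation acts as an isometry with translation length $\|\phi\|$ along a geodesic axis. Projecting to that axis and normalizing gives a map $\tilde M_\phi\to\mathbb{R}$ that descends to $M_\phi\to\mathbb{S}^1$ of the right homotopy type, whose differential has norm bounded by the Lipschitz constant of the nearest-point projection to the axis — and this Lipschitz constant is where the dependence on $S$ and the injectivity radius of $M_\phi$ comes in, producing the constant $C$. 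By Cauchy--Schwarz, $\int_{M_\phi}|\mathrm{d}u| \leqslant \big(\mathrm{vol}(M_\phi)\big)^{1/2}\big(\int_{M_\phi}|\mathrm{d}u|^2\big)^{1/2}$, and the energy is bounded in terms of $\mathrm{vol}(M_\phi)/\|\phi\|$ times $C^2$; alternatively one argues directly that the pointwise norm $|\mathrm{d}u|$ is pointwise bounded by $C/\|\phi\|$ after normalization, giving $\int_{M_\phi}|\mathrm{d}u|\leqslant \tfrac{C}{\|\phi\|}\mathrm{vol}(M_\phi)$ cleanly. Combining with the previous display gives $2g-2\leqslant \tfrac{3}{2\pi}\cdot\tfrac{1}{C\|\phi\|}\mathrm{vol}(M_\phi)$, i.e.\ $g\leqslant \tfrac{3}{4C\pi\|\phi\|}\mathrm{vol}(M_\phi)+1$, as claimed.

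The main obstacle I anticipate is making the comparison-map construction geometrically precise enough to extract the stated constant: one must identify the ``hyperbolic isometry defined by $\phi$'' — presumably via the action of $\pi_1(M_\phi)$ on $\mathbb{H}^3$, where the monodromy generator of the $\mathbb{Z}$-factor is a hyperbolic element with translation length $\|\phi\|$ — and then relate a $\pi_1$-equivariant harmonic or Lipschitz map $\mathbb{H}^3\to\mathbb{R}$ (coordinate along the axis) to a map on $M_\phi$ with the correct control. Subtleties include: ensuring the level sets really are connected genus-$g$ surfaces (this uses that $\phi$ is pseudo-Anosov, so the fibration is unique up to isotopy by Thurston, and that an energy minimizer in the fiber class has incompressible level sets), and controlling $|\mathrm{d}u|$ uniformly, which a priori requires an injectivity-radius-type lower bound to convert an $L^2$ energy bound into an $L^1$ bound or a pointwise bound — precisely the role played by the dependence of $C$ on the injectivity radius. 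A secondary point is that \eqref{average} bounds $\int R_N|\mathrm{d}u|$ rather than $\int|\mathrm{d}u|$ directly, but since $R_{M_\phi}$ is the constant $-6$ this is immediate; the genuinely delicate inequality is the upper energy bound.
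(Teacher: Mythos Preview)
Your proposal is correct and follows essentially the same route as the paper: apply \eqref{average} with $R_{M_\phi}=-6$, use $|\chi(\Sigma_\theta)|\geqslant|\chi(S)|$ for regular level sets (the paper only needs this inequality, not the equality you aim for), and bound $\int|\mathrm{d}u'|$ via Cauchy--Schwarz together with the fact that the harmonic representative minimizes $L^2$ energy. The one place the paper is simpler is the comparison map: rather than projecting to a geodesic axis in $\mathbb{H}^3$, it takes the fibration projection $u$ itself and observes that along each gradient flow line $|\mathrm{d}u|\leqslant 1/W$, where $W$ is the width of a fundamental domain in the cyclic cover, giving $\|\mathrm{d}u\|_{L^2}\leqslant\sqrt{\mathrm{vol}(M_\phi)}/W$ directly; only at the very end is $W$ related to $\|\phi\|$ through the bounded-diameter-of-$S$ argument that uses the injectivity-radius hypothesis.
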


In the last section, we compare $\| \phi \|$ to the entropy of the
pseudo-Anosov map $\phi$. The entropy of a pseudo-Anosov map is $\log \lambda
(\phi)$ where $\lambda (\phi)$ is the dilatation of $\phi$. Another
interpretation of the entropy is the translation length of the action of
$\phi$ on the Teichm{\"u}ller space with respect to the Teichm{\"u}ller
metric.

Using Minsky's geometric model, one can show that the entropy $\mathrm{ent}
(\phi)$ and $\| \phi \|$ are comparable once $S$ is fixed and the injectivity
radius of the mapping torus $M_{\phi}$ is bounded below. Hence we obtain

\begin{theorem}
  \label{thm2}{$\mbox{}$}
  
  \[ \frac{1}{3 \pi | \chi (S) |} \mathrm{ent} (M_{\phi}) \leqslant
     \mathrm{ent} (\phi) \leqslant \frac{3}{2 \pi | \chi (S) | K} \mathrm{ent}
     (M_{\phi}), \]
  where $K$ depends only on $S$ and the injectivity radius of $M_{\phi}$.
\end{theorem}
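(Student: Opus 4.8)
The plan is to obtain both inequalities by combining our volume estimate Theorem~\ref{main} with two further ingredients: the Kojima--McShane inequality $\mathrm{vol}(M_\phi)\leqslant 3\pi|\chi(S)|\,\mathrm{ent}(\phi)$, and a bilipschitz comparison $\mathrm{ent}(\phi)\asymp\|\phi\|$ read off from Minsky's model of the fibered hyperbolic structure; here $\mathrm{ent}(M_\phi)$ denotes the hyperbolic volume $\mathrm{vol}(M_\phi)$, as in the preceding discussion. I will use repeatedly that $|\chi(S)|=2g-2$, i.e. $g-1=\tfrac12|\chi(S)|$.

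The lower bound is a restatement of Kojima--McShane: rearranging $\mathrm{vol}(M_\phi)\leqslant 3\pi|\chi(S)|\,\mathrm{ent}(\phi)$ gives $\mathrm{ent}(\phi)\geqslant\tfrac{1}{3\pi|\chi(S)|}\mathrm{vol}(M_\phi)=\tfrac{1}{3\pi|\chi(S)|}\mathrm{ent}(M_\phi)$. Theorem~\ref{main} cannot be used in place of this, since it bounds $\|\phi\|$ only \emph{from above} by the volume, whereas a lower bound for the entropy requires bounding the translation length from below.

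For the upper bound, rearrange Theorem~\ref{main}: from $g\leqslant\tfrac{3}{4C\pi\|\phi\|}\mathrm{vol}(M_\phi)+1$ one gets $\|\phi\|\leqslant\tfrac{3\,\mathrm{vol}(M_\phi)}{4C\pi(g-1)}=\tfrac{3\,\mathrm{vol}(M_\phi)}{2C\pi|\chi(S)|}$. It remains to trade the hyperbolic translation length $\|\phi\|$ for the Teichm{\"u}ller translation length $\mathrm{ent}(\phi)=\log\lambda(\phi)$, and this is where Minsky's geometric model enters. Passing to the infinite cyclic cover $\widetilde{M}_\phi\cong S\times\mathbb{R}$ produces the doubly degenerate hyperbolic structure on $S\times\mathbb{R}$ whose two ending laminations are the stable and unstable laminations of $\phi$; its bilipschitz model is built from the pseudo-Anosov (Teichm{\"u}ller) axis of $\phi$ together with the associated subsurface projections. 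Since $M_\phi$ is a fixed closed hyperbolic manifold its injectivity radius is positive, so no short curves occur and the axis remains in a definite thick part of moduli space; on such a region Minsky's model theorem makes the hyperbolic metric in the $\mathbb{R}$-direction uniformly bilipschitz to the Teichm{\"u}ller-distance parametrization of the axis, with constant depending only on $S$ and $\mathrm{inj}\,M_\phi$. Hence $\mathrm{ent}(\phi)\leqslant C'\|\phi\|$ for such a constant $C'$, and putting $K:=C/C'$ the estimate above yields $\mathrm{ent}(\phi)\leqslant C'\cdot\tfrac{3\,\mathrm{vol}(M_\phi)}{2C\pi|\chi(S)|}=\tfrac{3\,\mathrm{ent}(M_\phi)}{2K\pi|\chi(S)|}$.

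I expect the main obstacle to be precisely this comparison $\mathrm{ent}(\phi)\asymp\|\phi\|$ with constants controlled only by $S$ and $\mathrm{inj}\,M_\phi$: one has to extract from Minsky's bilipschitz model theorem both that a lower bound on the injectivity radius confines the Teichm{\"u}ller axis of $\phi$ to a fixed thick part (equivalently, uniformly bounds the subsurface coefficients in the associated hierarchy) and that on that part the displacement by $\phi$ measured inside $M_\phi$ and measured along the axis in the Teichm{\"u}ller metric agree up to a bounded multiplicative factor. Quantifying this, i.e. pinning down $K$ and verifying that it depends only on the stated data, is the delicate step; once it is in hand the rest is the elementary manipulations above plus the Kojima--McShane inequality. (For the upper bound only the direction $\mathrm{ent}(\phi)\lesssim\|\phi\|$ is used; the opposite inequality, not needed here, follows from the same model comparison.)
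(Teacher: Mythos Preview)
Your proposal is correct and follows essentially the same route as the paper: the lower bound is exactly the Kojima--McShane inequality, and the upper bound combines the genus estimate (rearranged to bound $\|\phi\|$ by a multiple of $\mathrm{vol}(M_\phi)/|\chi(S)|$) with Minsky's bilipschitz model to pass from $\|\phi\|$ to $\mathrm{ent}(\phi)$. The only cosmetic difference is that the paper invokes the intermediate inequality \eqref{genus} (in terms of the width $W$) directly rather than the packaged Theorem~\ref{main}, absorbing both the $W\asymp\|\phi\|$ comparison and the Minsky constant into a single $K$; your version separates these constants and then recombines them, which is equivalent.
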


In general, if the injectivity radius goes to zero, $K$ also tends to zero.
Since there exist families of pseudo-Anosov maps whose entropy tends to
infinity while the volume of the mapping torus remains bounded, this is the
best that we can hope for except the explicit calculation of $K$. This
inequality is obtained in {\cite{kin-entropy-2009}} using Brock's inequality
{\cite{brock-weil-petersson-2003}}. Our proof relies on the harmonic map
technique in {\cite{stern-scalar-2020}}, and it is simpler.

\subsection{Genus bound for mapping torus}

Let $M_{\phi}$ be a hyperbolic mapping torus of $S$ via a pseudo-Anosov map
$\phi$ and
\[ u : M_{\phi} = S \times [0, 1] / (x, 0) \sim (\phi (x), 1) \to [0, 1] / 0
   \sim 1 \]
the projection. On the infinite cyclic cover $S \times \mathbb{R}$, $\phi$
acts as a translation. Since $\pi_1 (M_{\phi}) = \langle \pi_1 (S), t|t \gamma
t^{- 1} = \phi_{\ast} (\gamma), \gamma \in \pi_1 (S) \rangle$, $\phi$
corresponds to a hyperbolic isometry $t$, and we denote $\| \phi \|$ the
translation length of $t$ on $\mathbb{H}^3$. This is the hyperbolic
translation length of $\phi$ on the infinite cyclic cover $S \times
\mathbb{R}$. Hence $\| \phi \|$ denotes the width of the fundamental domain of
$M_{\phi}$ on the cyclic cover $S \times \mathbb{R}$ where the left and right
sides are identified by the action of $\phi$.

\begin{figure}[h]
  \begin{center}
    \begin{tikzpicture}[x=1cm,y=1cm]

\begin{scope}[shift={(2,0)}, thick]
\clip(-1.8,-2)rectangle(3,2);
\draw (0,0) circle [x radius=0.6, y radius=1.3];
\end{scope}
\begin{scope}[shift={(2,0)}, thick]
\clip(-1.8,-2)rectangle(3,2);
\draw (0,0.6) circle [x radius=0.1, y radius=0.5];
\end{scope}
\begin{scope}[shift={(2,0)}, thick]
\clip(-1.8,-2)rectangle(3,2);
\draw (0,-0.6) circle [x radius=0.1, y radius=0.5];
\end{scope}
\begin{scope}[shift={(-3,0)}, thick]
\clip(-1.8,-2)rectangle(3,2);
\draw (0,0) circle [x radius=0.6, y radius=1.3];
\end{scope}

\begin{scope}[shift={(-3,0)}, thick]
\clip(-1.8,-2)rectangle(3,2);
\draw (0,0.6) circle [x radius=0.1, y radius=0.5];
\end{scope}
\begin{scope}[shift={(-3,0)}, thick]
\clip(-1.8,-2)rectangle(3,2);
\draw (0,-0.6) circle [x radius=0.1, y radius=0.5];
\end{scope}
%\draw[shift={(2,0)}, yscale=cos(70), thick] (-1,0) arc (-180:0:1);
%\draw[shift={(2,0)}, yscale=cos(70), thick] (-0.7,-0.6) arc (180:0:0.7);

\draw[thick] (-4,-1.3) -- (4,-1.3);
\draw[thick] (-4,1.3) -- (4,1.3);
\draw[thick] (-2, -3) -- (1.5, -3);
%\draw [xscale=cos(70), dashed, thick] (0,-0.3) arc (-90:90:0.3);
%\draw [xscale=cos(70), thick] (0,0.3) arc (90:270:0.3);

%\draw [shift={(-0.7,0)}, xscale=cos(70), dashed, thick] (0,-0.3) arc (-90:90:0.3);
%\draw [shift={(-0.7,0)}, xscale=cos(70), thick] (0,0.3) arc (90:270:0.3);

%\draw [thick] (0,0.3) .. controls (0.1,0.3) and (0.1,0.34) .. (0.21,0.446);
%\draw [thick] (0,-0.3) .. controls (0.1,-0.3) and (0.1,-0.34) .. (0.21,-0.446);
\draw (-2, -2.8) node{{0}};
\draw (1.5, -2.8) node{{1}};
\draw(0.5, -2) node{{$u$}};
\draw(0, -2) node{{$\downarrow$}};
\draw (-0.5, -0.6) node{{$\Longrightarrow$}};
\draw (4, 0) node{{$S\times {\mathbb R}$}};
\draw (0,0) node{{$\phi$}};
\draw (-2.3,0) node{{$\rightarrow$}};
\draw (-1.85,0) node{{${\nabla u}$}};
\draw (-3, -1.6) node{{$u^{-1}(0)$}};
\draw (2, -1.6) node{{$u^{-1}(1)$}};
\draw (-5,0) node{{\tiny $-\infty$}};
%\draw (-0.7,-0.5) node{{\tiny $0$}};
%\draw (-0.35,-0.6) node{{\tiny $I$}};
%\draw (-1.1,0) node{{\tiny $\partial \Sigma$}};
%\draw (0,-0.5) node{{\tiny $1$}};
%\draw (2.7, 0.5) node{{\tiny $\Sigma$}};
%\draw (0,-1.3) node{{$\widehat{\Sigma}$}};

\end{tikzpicture}
  \end{center}
  
  \
  \caption{Lifted map of $u$ to $S \times \mathbb{R} \rightarrow [0, 1]
  \subset \mathbb{R}$.}
\end{figure}
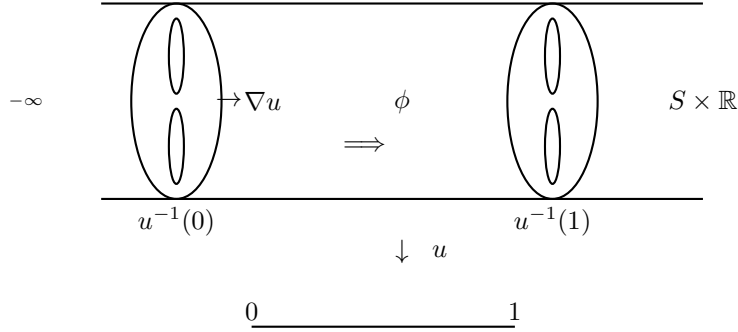

Choose orthonormal basis $e_1, e_2$ tangent to $\Sigma_{\theta} = u^{- 1}
(\theta)$, $e_3$ such that $e_3 = \frac{\nabla u}{| \nabla u|}$. %Then $|\mathrm{d} u| = | \nabla u| = \frac{1}{\| \phi \|}$.
 Note that  for any $x\in S\times \{0\}$,  along the gradient flow starting from $x$
 in  $u^{-1}(0)$ to $u^{-1}(1)$, $u$ behaves like the projection from $[0, \text{length of the gradient flow}]$ to $[0,1]$. Hence $|du|\leq \frac{1}{ \text{length of the gradient flow}}\leq \frac{1}{\text{W=width of the fundamental domain}}$ where the width means the smallest distance between points in
  $S\times\{0\}$ and in $S\times\{1\}$.
 Then $$\| \mathrm{d}
  u\|_{L^2}\leq \frac{\sqrt{\ensuremath{\operatorname{vol}} (M_{\phi})}}{(\text{W=width of the fundamental domain})}.$$
Let $u'$ be a harmonic map homotopic to $u$. For each regular $\theta \in
\mathbb{S}^1$, $u^{- 1} (\theta) = S$ and $u^{\prime - 1} (\theta) =
\Sigma_{\theta}$ are homotopic in $M_{\phi}$. Since $M_{\phi}$ is a mapping
torus of $S$, the genus of $\Sigma_{\theta}$ is bigger than the genus of $S$. By Sard's theorem,
$|\chi(\Sigma_{\theta})| \geq |\chi(S)|$ for almost all $\theta$ in $\mathbb{S}^1$. Hence by
equation \eqref{average}, we get
\begin{equation}
  - 4 \pi \chi (S) \leqslant \| \mathrm{d} u' \|_{L^2}  \|- 6\|_{L^2}
  \leqslant 6 \sqrt{\mathrm{vol} (M_{\phi})} \| \mathrm{d} u\|_{L^2} =
  \tfrac{6}{W} \mathrm{vol} (M_{\phi}) . \label{genus}
\end{equation}
Hence we get the genus bound of $S$ 
\[ g \leqslant \frac{3}{4 \pi W} \mathrm{vol} (M_{\phi}) + 1. \]
Once $S$ is fixed and if there is a lower bound for the injectivity radius of $M_\phi$,  the diameter of $S$ is bounded in $M_\phi$. Since $\phi$ identifies $S\times \{0\}$ to $S\times \{1\}$, $||\phi||$ and $W$ are comparable, i.e., there exists $C=C(inj M_\phi, S)$ such that $W\geq C ||\phi||$. Hence the above inequality becomes and prove Theorem \ref{main}
\[ g \leqslant \frac{3}{4 C\pi ||\phi||} \mathrm{vol} (M_{\phi}) + 1. \]

\subsection{Interpretation of $\| \phi \|$ and some applications}

In this section, we interpret $\| \phi \|$ as a quantity comparable to
$\mathrm{ent} (\phi)$ and give an independent proof of Theorem \ref{thm2}.

By Minsky {\cite{minsky-bounded-2001}}, it is known that the infinite cyclic
cover $\tilde{M}_{\phi} = S \times \mathbb{R}$ has a geometric model, the
universal curve over the Teichm{\"u}ller geodesic $\Gamma$ invariant by $\phi$
parametrized by the arc length. More precisely the geometric model is built as
follows. Fix a hyperbolic surface $X_0 = X$ on $\Gamma$. The universal curve
$C_{\Gamma}$ over $\Gamma$ is the collection of $X_t$ where $X_t$ is a
hyperbolic surface at time $t$. The fundamental domain of $\phi$ is the subset
over $[X, \phi (X)]$. The Teichm{\"u}ller distance $d_T (X, \phi (X))$ is
known to be $\mathrm{ent} (\phi)$. Then there exists a biLipschitz map $\Phi :
\tilde{M}_{\phi} \to C_{\Gamma}$ with biLipschitz constant $K$ depending only
on $S$ and the injectivity radius of $M_{\phi}$. Hence the hyperbolic
translation distance $\| \phi \|$ of $\phi$ on $\tilde{M}_{\phi}$ is
comparable to $\mathrm{ent} (\phi) K$, i.e.
\[ K (S, \mathrm{inj} M_{\phi}) \mathrm{ent} (\phi) \leqslant \| \phi \| . \]
By equation \eqref{genus}, we get
\begin{equation}
  \mathrm{ent} (\phi) \leqslant \frac{3}{2 \pi | \chi (S) | K} \mathrm{vol}
  (M_{\phi}) .
\end{equation}
By combining the result of Kojima-McShane {\cite{kojima-normalized-2018}}, we
obtain
\begin{equation}
  \frac{1}{3 \pi | \chi (S) |} \mathrm{vol} (M_{\phi}) \leqslant \mathrm{ent}
  (\phi) \leqslant \frac{3}{2 \pi | \chi (S) | K} \mathrm{vol} (M_{\phi}) .
\end{equation}
One can compare this inequality with the inequality obtained by Brock
{\cite{brock-weil-petersson-2003}} by relating the entropy to Weil-Petersson
translation length. Indeed, the Weil-Petersson metric $g_{WP}$ and the
Teichm{\"u}ller metric $g_T$ satisfy the inequality $g_{WP} \leqslant 2 \pi |
\chi (S) | g_T$ in general, once the injectivity radius of $M_{\phi}$ has a
lower bound, there exists a constant $C$ depending only on the topology of $S$
and the lower bound of the injectivity radius {\cite{kin-entropy-2009}} such
that
\[ C^{- 1} \| \phi \|_{WP} \leqslant \mathrm{ent} (\phi) \leqslant C \| \phi
   \|_{WP} . \]

\appendix\section{Regularity of mixed boundary value problems}

In this appendix, our goal is the existence of mixed boundary value problem
\begin{equation}
  L u = f \text{ in } Q, u = \varphi \text{ on } T \cup B, \tfrac{\partial
  u}{\partial \nu} = \psi \text{ on } F \label{general mvp}
\end{equation}
on a three dimensional cube whose dihedral angles are all $\tfrac{\pi}{2}$.
Here $L$ is defined to be $L u = g^{i j} \partial_i \partial_j u + b^i
\partial_i + c u$ and $g^{i j}$ is the inverse metric of the cube $Q$. To
achieve a solution in $C^{2, \alpha} (\bar{Q})$, we assume that there exists a
function $u' \in C^{2, \alpha} (\bar{Q})$ such that $u' = \varphi$ on $T \cup
B$ and $\tfrac{\partial u'}{\partial \nu} = \psi$ on $F$. This is nothing more
than an easy way of prescribing the compatibility of boundary conditions.

\begin{theorem}
  \label{general existence}There exists a solution $u$ in $C^{2, \alpha} 
  (\bar{Q})$ to \eqref{general mvp}.
\end{theorem}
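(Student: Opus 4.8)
The plan is to reduce to the homogeneous boundary case, then run the continuity/Schauder method adapted to the mixed boundary value problem on a domain with edges where the two boundary conditions (Dirichlet on $T\cup B$, Neumann on $F$) meet at right angles. First I would subtract off the prescribed extension $u'\in C^{2,\alpha}(\bar Q)$: setting $v=u-u'$, the problem becomes $Lv = f - Lu' =: \tilde f$ in $Q$ with $v=0$ on $T\cup B$ and $\tfrac{\partial v}{\partial\nu}=0$ on $F$, so without loss of generality we may assume $\varphi=0$, $\psi=0$. The right-angle compatibility is exactly what makes this clean: near a top edge $E=\overline{T}\cap\overline{F}$, one can locally reflect across $F$ (the Neumann face) to double the domain, and the homogeneous Dirichlet condition on $T$ is preserved under this even reflection, so the reflected solution solves a pure Dirichlet problem in a half-space-type corner; similarly near a vertical edge $F_1\cap F_2$ one reflects successively across both Neumann faces.

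The key steps, in order, are: (i) establish existence of a weak solution $v\in H^1(Q)$ with the mixed boundary condition via the Lax–Milgram theorem applied to the bilinear form associated to $L$ on the closed subspace of $H^1(Q)$ of functions vanishing on $T\cup B$ (after absorbing the lower-order terms, or using the Fredholm alternative if $c$ is not favorably signed — but since the actual application is $L=\Delta_g$, $c\equiv 0$, coercivity is immediate from the Poincaré inequality using the Dirichlet part of the boundary); (ii) interior Schauder estimates give $v\in C^{2,\alpha}_{loc}(Q)$; (iii) boundary Schauder estimates at interior points of $T\cup B$ (standard Dirichlet) and at interior points of $F$ (standard Neumann, or via reflection) give $C^{2,\alpha}$ up to the open faces; (iv) the edge and vertex regularity: use the reflection across the Neumann faces described above to convert the local model at each edge into a plain Dirichlet (resp. mixed but with the corner angle now $\pi$ after doubling, i.e. a flat boundary) problem, so that interior/boundary Schauder estimates of the doubled equation — whose coefficients remain Hölder because the metric is $C^{2,\alpha}$ and the reflection is across a totally geodesic-like coordinate face in suitable coordinates — yield $v\in C^{2,\alpha}$ across the edges; at a vertex the double reflection across the two vertical faces plays the same role, and the top/bottom faces are handled by the even reflection preserving their Dirichlet condition.

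The main obstacle is step (iv): genuinely the reflection trick requires choosing boundary coordinates in which the Neumann face $F_i$ is flat \emph{and} the conormal $\nu$ agrees with the coordinate normal, which for a general $C^{2,\alpha}$ metric means working in Fermi-type coordinates off each face; one must check that after this change of variables $L$ still has $C^{\alpha}$ coefficients and that the right-angle hypothesis guarantees the two faces meeting at an edge can be simultaneously flattened compatibly (this is where dihedral angle $=\pi/2$ is essential — it is precisely the condition under which the successive reflections close up into a well-defined doubled domain with a genuinely smooth, non-cusped boundary). I would carry this out by localizing near a single edge, applying the two model reflections, invoking the flat-boundary Schauder theory (e.g. Gilbarg–Trudinger Chapter 6, or Agmon–Douglis–Nirenberg for the mixed system), and then patching the local $C^{2,\alpha}$ bounds with a partition of unity subordinate to a cover of $\bar Q$ by interior balls, face half-balls, edge quarter-balls and vertex eighth-balls. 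Once the a priori $C^{2,\alpha}$ estimate $\|v\|_{C^{2,\alpha}(\bar Q)}\le C(\|\tilde f\|_{C^{\alpha}(\bar Q)}+\|v\|_{C^0(\bar Q)})$ is in hand, the method of continuity connecting $L$ to $\Delta_g$ (or directly the regularity of the already-constructed weak solution) finishes the proof.
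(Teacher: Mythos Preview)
Your overall architecture matches the paper's---reduce to homogeneous data, establish Schauder estimates up to edges and vertices, then close by continuity or by regularising a weak solution---and the reflection idea is indeed the engine. But the specific claim you make at step (iv), that after even reflection across a Neumann face ``the coefficients remain H\"older,'' is not correct as stated, and this is exactly the point where the paper's argument diverges from yours. In the Fermi-type coordinates of Lemma~\ref{general fermi} the principal coefficients do survive reflection: $g^{1j}$ vanishes on the face, so its odd reflection is $C^\alpha$, and even reflection of the remaining $g^{ij}$ is always $C^\alpha$. The first-order coefficient $b^1$, however, must be reflected \emph{oddly}, and on the face $b^1$ equals (up to sign) the mean curvature of that face, which is not assumed to vanish. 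The doubled $\tilde b^1$ therefore has a jump discontinuity, so you cannot invoke interior $C^{2,\alpha}$ Schauder on the doubled problem directly. This is repairable---one can pass through $W^{2,p}$ to get $C^{1,\gamma}$, move $b^i\partial_i u$ to the right-hand side (now genuinely $C^\alpha$ in the original domain), and reflect only the pure second-order equation---but that bootstrap is a real extra step you have not supplied.

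The paper avoids the issue by never reflecting the variable-coefficient operator. It proves the edge and vertex Schauder estimate intrinsically: a Liouville theorem on the model wedges $W_k$ is obtained by reflecting \emph{harmonic} functions (constant coefficients only), and then Simon's freezing-and-scaling argument is run, the point being that at the frozen boundary point the principal symbol already satisfies \eqref{crucial boundary condition}, so the constant-coefficient model is exactly the reflectable one. Existence is then built on model half- and quarter-balls via explicit Green functions and the method of continuity, and transplanted to $Q$ by a local coordinate change plus approximation. So where you reflect the full equation and hope the coefficients cooperate, the paper reflects only after freezing and handles the variable coefficients by perturbation.
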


To show this theorem, the appendix is outline as follows: First, we establish
a maximum principle for mixed boundary value problems. Second, we establish
the $C^{2, \alpha}$ estimates. Using the method of continuity, we solve a
similar mixed boundary value problems on two model domains which are
respectively half ball and a quarter ball. By a classical technique
{\cite{ladyzhenskaya-linear-1968}} we map a neighborhood of a point in a cube
to a neighborhood of the model domains obtaining a $C^{2, \alpha}$ regular
solution for mixed boundary value problem in the cube. The reasons we do not
attempt to solve the problem on a cube directly are: The Green function on a
standard cube is not explicit and is difficult to analyze. The proof basically
follows follows the same lines as classical elliptic theory for Dirichlet
boundary value problems.

First, we recall the existence of a special local coordinate from {\cite[Lemma
2.2]{li-dihedral-2020}}.

\begin{lemma}
  \label{general fermi}If $p \in \partial Q$ belong to $k$ faces $F_1, \ldots,
  F_k$ where $1 \leqslant k \leqslant 3$, then there exists a local coordinate
  system such that $\{z_j \}_{j = 1}^3$ where each face is given by constant
  level set $\{z_j = 0\}$ or $\{z_j = 1\}$ with labeling consistent with the
  coordinate system $\{x_j \}$ given by the diffeomorphism $\Psi$ to the
  standard cube, and on each face $F_i$,
  \begin{equation}
    g (\tfrac{\partial}{\partial z_i}, \tfrac{\partial}{\partial z_j}) = 0, g
    (\tfrac{\partial}{\partial z_i}, \tfrac{\partial}{\partial z_i}) = 1\
    \text{on } F_i \label{crucial boundary condition}
  \end{equation}
  for all $j \neq i$.
\end{lemma}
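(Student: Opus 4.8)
We argue by induction on the number $k$ of faces through $p$, constructing the coordinates by iterating the boundary normal (Fermi) construction one face at a time and invoking the $\pi/2$ dihedral angle hypothesis at each step. When $k=1$, so $p$ lies in the interior of a single face $F_1$, we take $z_1$ to be the signed geodesic distance to $F_1$ and let $(z_2,z_3)$ be arbitrary coordinates on $F_1$ near $p$, extended into $Q$ by declaring them constant along the unit-speed geodesics hitting $F_1$ orthogonally. The Gauss lemma gives $g=\mathrm{d} z_1^2+g_{z_1}$, where $g_{z_1}$ is a one-parameter family of metrics on the slices $\{z_1=\mathrm{const}\}$, so $g(\partial_{z_1},\partial_{z_1})\equiv 1$ and $g(\partial_{z_1},\partial_{z_j})\equiv 0$ for $j\neq 1$; thus \eqref{crucial boundary condition} holds on $F_1$ (in fact in a whole neighborhood). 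We relabel the axes, reversing $z_1$ if needed, so the labeling matches the coordinates $\{x_j\}$ coming from $\Psi$.

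For $k=2$ let $E=F_1\cap F_2$ and $p\in E$. First run the $k=1$ construction relative to $F_1$, choosing the slice coordinates $(w_2,w_3)$ so that $E\cap F_1=\{z_1=0,\ w_2=0\}$; then $g=\mathrm{d} z_1^2+h_{ab}(z_1,w)\,\mathrm{d} w^a\mathrm{d} w^b$ near $p$, and \eqref{crucial boundary condition} already holds for $i=1$ on $F_1$. The dihedral angle along $E$ being $\pi/2$ forces $\nu_1=\partial_{z_1}$ to be tangent to $F_2$ along $E$, so $F_2$ is a graph $w_2=\psi(z_1,w_3)$ near $E$ with $\psi(0,w_3)=0$ and $\partial_{z_1}\psi(0,w_3)=0$, i.e. $\psi$ vanishes to second order along $F_1$. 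Setting $z_2:=w_2-\psi(z_1,w_3)$ and $z_3:=w_3$ straightens $F_2$ to $\{z_2=0\}$; since $\partial_{z_1}\psi\equiv 0$ on $\{z_1=0\}$, this is an $O(z_1^2)$ perturbation of the identity along $F_1$ and hence leaves $g(\partial_{z_1},\partial_{z_1})=1$ and $g(\partial_{z_1},\partial_{z_j})=0$ intact on $F_1$, while along $E$ one checks directly, using that $\nu_1,\nu_2$ and the tangent to $E$ are orthonormal there, that all $g(\partial_{z_i},\partial_{z_j})=\delta_{ij}$. It remains to arrange the normalization for $i=2$ along all of $F_2$; this is done by a further reparametrization of the $(z_2,z_3)$ variables, designed as a Fermi-type normalization in the slices transverse to $E$ and to restrict to an isometry along $E$, and the point is to carry it out without spoiling the $i=1$ normalization. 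The case $k=3$, with $p$ a vertex on $F_1,F_2,F_3$, follows by one more iteration: after the previous step the face $F_3$ contains both edges through $p$, and the $\pi/2$ conditions force it to be tangent, along those edges, to the span of the coordinate directions already adapted to them, so that straightening $F_3$ to $\{z_3=0\}$ is once again a correction vanishing to the order needed to preserve what was arranged along $F_1$, $F_2$ and the two edges. A final relabeling makes the system consistent with $\Psi$.

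The main obstacle is precisely this compatibility across faces: one cannot obtain \eqref{crucial boundary condition} for free along more than one face from a single Fermi chart, because after normalizing along $F_1$ the metric is no longer a product away from $F_1$, so each subsequent straightening of $F_2$ (and $F_3$) must be shown to perturb the metric only to higher order along exactly the faces and edges where \eqref{crucial boundary condition} is asserted. The higher-order vanishing that makes this work is the content of the $\pi/2$ hypothesis — it guarantees that each later face is tangent, along the relevant lower-dimensional stratum, to the coordinate directions not yet consumed — and the technical heart of the argument is the bookkeeping of these tangency and vanishing orders together with the transverse-slice reparametrization that upgrades orthogonality to the unit-length normalization along $F_2$ and $F_3$ without disturbing $F_1$.
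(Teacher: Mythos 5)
Note first that the paper does not prove this lemma at all: it is recalled verbatim from \cite[Lemma 2.2]{li-dihedral-2020}, so your proposal must be judged as a proof of Li's statement. Your $k=1$ case and the straightening of $F_2$ (including the tangency $\partial_{z_1}\psi=0$ along $E$, which is indeed where the $\pi/2$ hypothesis enters) are fine, and the orthonormality claim along $E$ is correct provided you additionally take $(w_2,w_3)$ to be Fermi coordinates for $E$ inside $F_1$, not merely coordinates with $E=\{w_2=0\}$. The genuine gap is the step you defer: ``a further reparametrization of the $(z_2,z_3)$ variables \ldots without spoiling the $i=1$ normalization.'' This is not bookkeeping to be supplied later; as you have set it up it is impossible in general. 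On $F_2=\{z_2=0\}$ the coordinate vectors $\partial_{z_1},\partial_{z_3}$ span $TF_2$, so the required conditions $g(\partial_{z_2},\partial_{z_1})=g(\partial_{z_2},\partial_{z_3})=0$, $g(\partial_{z_2},\partial_{z_2})=1$ say exactly that $\partial_{z_2}=\pm\nu_2$ along $F_2$, and by duality ($\mathrm{d}z_1(\partial_{z_2})=0$) this forces $\mathrm{d}z_1(\nu_2)=0$ at every point of $F_2$. That is a constraint on the coordinate function $z_1$ alone, which no reparametrization of $(z_2,z_3)$ can affect; and for your $z_1$ (the Fermi distance to $F_1$) it fails off the edge unless $F_2$ happens to be ruled by the normal geodesics of $F_1$. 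Concretely, in flat $\mathbb{R}^3$ take $F_1=\{x_1=0\}$ and $F_2=\{x_2=x_1^2\}$, which meet at angle $\pi/2$ along the whole $x_3$-axis; there $z_1=x_1$ and $\mathrm{d}x_1(\nu_2)\propto -2x_1\neq 0$ on $F_2\setminus E$, so no admissible choice of $(z_2,z_3)$ exists. The $k=3$ step inherits the same defect for $F_3$.

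The remedy is to abandon the idea of freezing $z_1$ after the first Fermi step. Dually, the lemma asks for three functions whose $1$-jets are prescribed along \emph{every} face through $p$: for instance $z_1=0$ on $F_1$, $\mathrm{d}z_1(\nu_1)=1$ on $F_1$, and $\mathrm{d}z_1(\nu_2)=0$ on $F_2$ (and $\mathrm{d}z_1(\nu_3)=0$ on $F_3$), and cyclically in the indices. The $\pi/2$ hypothesis is exactly what makes these requirements compatible where the faces meet: along $E$ the normal $\nu_2$ is tangent to $F_1$, so the condition ``$z_1$ constant on $F_1$'' already gives $\mathrm{d}z_1(\nu_2)=0$ there, and similarly at the vertex. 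Thus each coordinate function must be corrected near the other faces, away from its own face, by terms vanishing to the appropriate order, and all three must be handled simultaneously before checking that the resulting map is a diffeomorphism near $p$ with the faces as coordinate level sets. This simultaneous jet-prescription (carried out in \cite[Lemma 2.2]{li-dihedral-2020}) is precisely the content your sketch replaces by an assertion, so as it stands the proposal does not prove the lemma.
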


\subsection{Hopf boundary point lemma}

We have a Hopf boundary point lemma for $u$.

\begin{lemma}
  \label{hopf maximum principle}If $L$ is uniformly elliptic, $c = 0$ and $L u
  \geqslant 0$, let $x_0 \in \partial Q$ be a point at an open edge where
  faces $F_1$ and $F_2$ meet such that $u$ is continuous at $x_0$, $u (x_0) >
  u (x)$ for all $x \in Q$. If $\tfrac{\partial u}{\partial \nu_1} \leqslant
  0$ along $F_1$, then the outer normal derivative of $u$ at $x_0$, if exists
  satisfies the strict inequality
  \begin{equation}
    \tfrac{\partial u}{\partial \nu_2} (x_0) > 0.
  \end{equation}
  If $c \leqslant 0$, the same conclusion holds provided $u (x_0) \geqslant
  0$, and if $u (x_0) = 0$, the same conclusion holds irrespective of the sign
  of $c$.
\end{lemma}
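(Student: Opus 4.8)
The plan is to reduce to the classical interior-ball Hopf lemma by choosing a comparison ball that is tangent to $Q$ along the edge and entirely contained in $Q$, exploiting the right-angle dihedral condition. First I would use Lemma \ref{general fermi} to pass to the local coordinates $\{z_1,z_2,z_3\}$ near $x_0$ in which $F_1 = \{z_1 = 0\}$, $F_2 = \{z_2 = 0\}$, the edge $E = F_1\cap F_2$ is $\{z_1 = z_2 = 0\}$, and, crucially, $g(\partial_{z_1},\partial_{z_2}) = 0$ with $g(\partial_{z_i},\partial_{z_i}) = 1$ on the respective faces. In these coordinates the two outward conormals $\nu_1,\nu_2$ at points of $E$ are (asymptotically) $-\partial_{z_1}$ and $-\partial_{z_2}$ and are orthogonal, so the region $Q$ near $x_0$ looks, to first order, like the quadrant $\{z_1 > 0, z_2 > 0\}$ crossed with the $z_3$-direction.

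The key geometric step is the construction of the comparison domain. In a neighborhood of $x_0$ I would take a Euclidean ball $B$ (in the $z$-coordinates) whose closure meets $\bar Q$ only at $x_0$, placed so that its center lies on the diagonal ray $\{z_1 = z_2 = s, z_3 = z_3(x_0)\}$ for small $s>0$; because the opening angle of $Q$ at $E$ is exactly $\pi/2$, such a ball can be made to sit inside $Q$ and be tangent to the corner at $x_0$. On $B$ I would build the standard Hopf barrier $w = e^{-\alpha r^2} - e^{-\alpha r_0^2}$ (with $r$ the $z$-distance to the center of $B$, $r_0$ its radius), which for $\alpha$ large enough satisfies $Lw \ge 0$ on the annular region $B\setminus B_{r_0/2}$, using uniform ellipticity of $L$ and boundedness of the coefficients $b^i$; when $c\le 0$ one uses $w\ge 0$ there, and the sign hypotheses on $u(x_0)$ handle the term $cu$ exactly as in the classical argument. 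Then $u - u(x_0) + \varepsilon w \le 0$ on the inner sphere $\partial B_{r_0/2}$ for small $\varepsilon$ (since $u(x_0) > u$ strictly on the compact inner sphere), on the outer boundary piece of the annulus lying in the interior of $Q$ by $w = 0$ there and $u < u(x_0)$, and — this is the new ingredient — on the part of $\partial B$ touching $F_1$: there I would differentiate the inequality $u - u(x_0) + \varepsilon w \le 0$ along $F_1$, but since $B$ meets $F_1$ only at the single point $x_0$ the maximum principle comparison only needs to be run on the open region $B\cap Q$, whose boundary splits into the inner sphere and an arc lying in $\bar Q$ with the arc meeting $\partial Q$ only at $x_0$. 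Hence $u - u(x_0) + \varepsilon w \le 0$ on all of $B\cap Q$ by the ordinary (weak) maximum principle for $L$, with equality at $x_0$.

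Finally, evaluating the outer normal derivative in the $\nu_2$-direction at $x_0$: from $u - u(x_0) + \varepsilon w \le 0$ with equality at $x_0$ we get $\partial_{\nu_2}(u - u(x_0) + \varepsilon w)(x_0) \ge 0$, i.e. $\partial_{\nu_2} u(x_0) \ge -\varepsilon \,\partial_{\nu_2} w(x_0)$, and a direct computation gives $\partial_{\nu_2} w(x_0) < 0$ (the barrier decreases as one exits $B$), so $\partial_{\nu_2} u(x_0) > 0$. The main obstacle I anticipate is making precise the claim that an interior tangent ball fitting into the $\pi/2$-corner exists and that $Lw\ge0$ genuinely holds on $B\cap Q$ despite the metric coefficients $g^{ij}$ being only $C^{\alpha}$ (or the domain only $C^{2,\alpha}$): one must verify that after the coordinate change of Lemma \ref{general fermi} the corner is still exactly a right angle to leading order and that lower-order perturbations of the metric do not destroy the sign of $Lw$, which is where the uniform ellipticity and the freedom to enlarge $\alpha$ and shrink $B$ are used. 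The role of the hypothesis $\partial u/\partial\nu_1 \le 0$ on $F_1$ is to guarantee that the comparison on $B\cap Q$ is not spoiled along the face $F_1$ away from $x_0$; I would note that since $\bar B \cap \partial Q = \{x_0\}$ this hypothesis is in fact only needed in the limiting/degenerate placement of the ball, and spell out that subtlety carefully.
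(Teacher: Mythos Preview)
Your construction has a genuine geometric obstruction: no interior tangent ball exists at a convex right-angle corner. Concretely, in the coordinates of Lemma~\ref{general fermi} with $x_0=0$ and $Q$ locally the quadrant $\{z_1>0,\ z_2>0\}$, a Euclidean ball centered at $(s,s,z_3(x_0))$ passing through $0$ has radius $s\sqrt{2}$, which exceeds the distance $s$ from the center to each face $\{z_i=0\}$; hence the ball protrudes through both faces and cannot satisfy $\bar B\cap\partial Q=\{x_0\}$. More generally, any ball contained in the quadrant with center $(a,b,\cdot)$, $a,b>0$, must have radius at most $\min(a,b)<\sqrt{a^2+b^2}$, so it can never reach the origin. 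This is exactly the failure of the interior ball condition at a convex corner, and it is not a matter of shrinking $B$ or perturbing the metric.

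The paper's argument gets around this by \emph{not} trying to keep the ball inside $Q$. Instead it centers the ball at a point $y\in F_1$ (so the comparison region is a half-annulus $A=(B(y,\delta_2)\setminus B(y,\rho))\cap W$), and then the boundary of $A$ has a genuine piece lying in $F_1$. The hypothesis $\partial u/\partial\nu_1\le 0$ is used there in an essential way: because the center $y$ lies on $F_1$, the barrier $v(z)=e^{-\alpha r^2}-e^{-\alpha\delta_2^2}$ is even in $z_1$ and hence $\partial v/\partial\nu_1=0$ on $F_1$, so $\partial(u-u(x_0)+\varepsilon v)/\partial\nu_1\le 0$ along $F_1\cap\bar A$; the classical Hopf lemma then rules out a maximum on that face, and the weak maximum principle finishes the comparison. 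Your remark that the Neumann hypothesis is ``only needed in the limiting/degenerate placement'' is therefore backwards: it is the mechanism that replaces the missing interior ball condition, and without it the argument does not close.
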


\begin{proof}
  Near $x_0$, there exists a local coordinate $\{z_j \}$ in a neighborhood $U$
  of $x_0$ from Lemma \ref{general fermi}. Without loss of generality, we
  assume that $x_0 = 0$, each $F_i$ is a small piece of $\{z_i = 0\}$ and $U$
  is the intersection of a small ball $B (0, \delta_1)$ with the wedge $W =
  \{z_1 > 0, z_2 > 0\}$. In the local coordinate $\{z_j \}$, we can pick a
  point $y \in F_1 \cap U$ and a ball $B (y, \delta_2)$ such that $x_0 = 0 \in
  \partial B (y, \delta_2)$ and $B (y, \delta_2) \cap W \subset U$.
  
  Setting these things up, we can follow {\cite[Lemma
  3.4]{gilbarg-elliptic-1983}}. For $0 < \rho < \delta_2$, for a constant
  $\alpha > 0$ to be determined, we define the auxiliary function
  \begin{equation}
    v (z) = \mathrm{e}^{- \alpha r^2} - \mathrm{e}^{- \alpha \delta_2^2},
    \label{hopf auxillary function}
  \end{equation}
  where $r$ denotes the $z$-distance between $z$ and $y$. Direct calculation
  shows that
\begin{align}
& L v (z) \\
= & \mathrm{e}^{- \alpha r^2} [4 \alpha^2 g^{i j} (z_i {- y_i} ) (z_j -
y_j) - 2 \alpha (g^{i i} + b^i (z_i - y_i))] + c v \\
\geqslant & \mathrm{e}^{- \alpha r^2} [4 \alpha^2 \lambda (z) r^2 - 2
\alpha (g^{i i} + |b^i |r) + c],
\end{align}
  where $\lambda (z)$ is the smallest eigenvalue of $g^{i j} (z)$. Since the
  domain we are dealing with is compact and $g^{i j}$ is never degenerate,
  hence $\alpha$ could be chosen large enough such that $L v \geqslant 0$ in
  the annular region $A = (B (y, \delta_2)\backslash B (y, \rho)) \cap W$.
  
\begin{figure}[h]
  \begin{center}
    \begin{tikzpicture}[x=1cm,y=1cm]
\draw [thick] (0,0) -- (0,7);
\draw [thick] (0,0) -- (7,0);
\draw [thick, red] (0,0) arc [start angle=270, end angle=450, radius=3];
\draw [thick, blue] (0,1.5) arc [start angle=270, end angle=450, radius=1.5];
\coordinate (y) at (-0.2,3);
\node  at (y)  {$y$};
\node at (0,3) [circle,fill,inner sep=1pt]{};
\node [rotate=40]  at (2,2) {$A=B(y,\delta_2)-B(y,\rho)$};
\node at (6,6) {$W$};
\node at (-0.4,7.3) {$F_1$};
\node at (5.3, -0.4) {$F_2$};
\node  at (-0.1, -0.23) {$x_0$};
\node at (0,0) [circle,fill,inner sep=1pt]{};
\end{tikzpicture}
  \end{center}
  \caption{Illustration of Hopf boundary point lemma}
\end{figure}
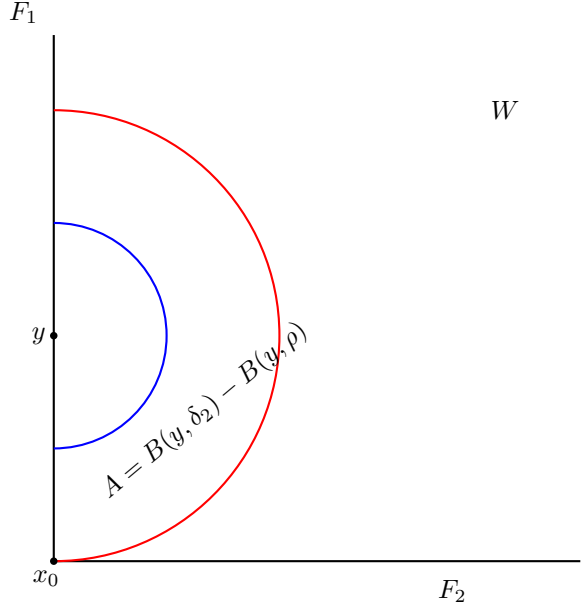
  
  Since $u - u (x_0) < 0$ on $\partial B (y, \rho)$, there is a constant
  $\varepsilon > 0$ for which $u - u (x_0) + \varepsilon v \leqslant 0$ on
  $\partial B (y, \rho)$. The inequality is also satisfied on $\partial B (y,
  \delta_2)$ since $v = 0$ along $\partial B (y, \delta_2)$. So we have that
  \begin{equation}
    L (u - u (x_0) + \varepsilon v) \geqslant - c u (x_0) \geqslant 0 \text{
    in } A.
  \end{equation}
  Note that $\nu_1$ coincides with the Euclidean ($z$ coordinate) normal from
  \eqref{crucial boundary condition}. So we have that $\tfrac{\partial (u - u
  (x_0) + \varepsilon v)}{\partial \nu_1} \leqslant 0$ from $\tfrac{\partial
  u}{\partial \nu_1} \leqslant 0$ along $F_1$ and \eqref{hopf auxillary
  function}. By the classical Hopf boundary point lemma, the maximum of $u - u
  (x_0) + \varepsilon v$ cannot be attained at $F_1 \cap \bar{A}$. So by weak
  maximum principle, $u - u (x_0) + \varepsilon v \leqslant 0$ in $A$, so
  \begin{equation}
    \tfrac{\partial u}{\partial \nu_2} (x_0) \geqslant - \varepsilon
    \tfrac{\partial v}{\partial \nu_2} = - \varepsilon v' (\delta_2) > 0
  \end{equation}
  as claimed.
\end{proof}

It follows directly that the solution $u \in C^2 (\bar{Q})$ to \eqref{mvp}
cannot attain its maximum at open vertical edges. This could be shown via an
alternative argument. If the maximum is attained at $x'$ of an open vertical
edge. Then locally we even reflect $u$ across a neighboring side face using
coordinate system at $x'$ from Lemma \ref{general fermi}. The reflected $u$ is
easily verified to be $C^2$ in the interior. Then the reflected $u$
contradicts the Hopf maximum principle. From the usual Hopf maximum principle,
the maximum cannot be attained in the interior of $Q$ and at open side faces.
To conclude, we have $0 \leqslant u \leqslant 1$ if $u \in C^1 (\bar{Q}) \cap
C^2 (\bar{Q})$.

\subsection{Schauder estimate}

Let $W_k = \{x \in \mathbb{R}^3 : x_i \geqslant 0 \text{for all } 1 \leqslant
i \leqslant k\}$ when $1 \leqslant k \leqslant 3$ and $W_k =\mathbb{R}^3$ when
$k = 0$. Let $\{x_i = 0\}$ be a face, denote by $\partial_{(i)}$ the partial
derivatives on $\{x_i = 0\}$ which is with respect to $x_j$ for $j \neq i$. We
define the seminorms for $u$ along $\{x_i = 0\}$ using $\partial_{(i)}$ i.e.
\begin{equation}
  | \partial_{(i)} \partial_i g|_{\alpha}  \text{and } | \partial_{(i)}
  \partial_{(i)} g|_{\alpha} \label{holder seminorms one face}
\end{equation}
for $g \in C^{2, \alpha} (\{x_i = 0\})$ and $g \in C^{2, \alpha} (\{x_i =
0\})$.

\begin{lemma}
  \label{simon lemma}Any $u \in C^{2, \alpha} (\bar{W}_k)$ satisfies the
  estimate
  \begin{equation}
    | \partial \partial u|_{\alpha} \leqslant C| \Delta u|_{\alpha}
    +\mathcal{E}_k (u) \label{schauder baby}
  \end{equation}
  where $\Delta$ is the standard Laplace operator and $\mathcal{E}_k (u)$
  contains seminorms on faces of $W_k$ in either form of \eqref{holder
  seminorms one face}.
\end{lemma}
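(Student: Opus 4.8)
The plan is to prove \eqref{schauder baby} by L.\ Simon's scaling (blow-up) method, carrying out an induction on the number $k$ of faces. The case $k=0$ is the global interior Schauder estimate $|\partial\partial u|_\alpha \leqslant C|\Delta u|_\alpha$ on $\mathbb{R}^3$, which is itself provable by the same scaling argument, so I would simply fold it into the general induction rather than quote it separately. For the inductive step I argue by contradiction: if \eqref{schauder baby} fails there is a sequence $u_j \in C^{2,\alpha}(\bar W_k)$ with $[\partial\partial u_j]_\alpha = 1$ while $|\Delta u_j|_\alpha + \mathcal{E}_k(u_j) \to 0$. Choosing $p_j, q_j \in \bar W_k$ that almost attain the seminorm, so that $|\partial^2 u_j(p_j) - \partial^2 u_j(q_j)| \geqslant \tfrac12 d_j^{\alpha}$ with $d_j := |p_j - q_j|$, I set
\[
  v_j(x) = d_j^{-(2+\alpha)}\bigl(u_j(p_j + d_j x) - Q_j(x)\bigr),
\]
where $Q_j$ is the second order Taylor polynomial of $u_j$ at $p_j$, expressed in the rescaled variable. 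Then on the rescaled domain $D_j := (W_k - p_j)/d_j$ one computes $v_j(0) = \partial v_j(0) = \partial^2 v_j(0) = 0$, $[\partial\partial v_j]_\alpha = 1$ on $D_j$, and $|\partial^2 v_j(\xi_j)| \geqslant \tfrac12$ with $\xi_j := (q_j - p_j)/d_j$ a unit vector, while $\Delta v_j \to 0$ in $C^\alpha_{\mathrm{loc}}$ and the rescaled face seminorms of $v_j$ tend to $0$ as well.

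The point that keeps the argument from being circular is that the normalization at the origin alone forces the uniform pointwise bounds $|\partial^2 v_j(x)| \leqslant |x|^{\alpha}$, $|\partial v_j(x)| \leqslant |x|^{1+\alpha}$, $|v_j(x)| \leqslant |x|^{2+\alpha}$ on $D_j$, so no elliptic estimate is needed to extract, after passing to a subsequence, a limit $v_j \to v_\infty$ in $C^2_{\mathrm{loc}}$. After a further subsequence and a translation, the domains $D_j$ converge to one of the model wedges $W_m$ with $0 \leqslant m \leqslant 3$, and $v_\infty$ is harmonic in the interior, has $[\partial\partial v_\infty]_\alpha \leqslant 1$, $\partial^2 v_\infty(0) = 0$, and $|\partial^2 v_\infty(\xi_\infty)| \geqslant \tfrac12$ for a unit vector $\xi_\infty$; hence $v_\infty$ is not a quadratic polynomial. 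On each face $\{x_i = 0\}$ of the limiting wedge, $v_\infty$ inherits — according to which of the two seminorms in \eqref{holder seminorms one face} is included in $\mathcal{E}_k$ for that face — either $\partial_{(i)}\partial_i v_\infty \equiv 0$, i.e.\ $\partial_i v_\infty$ is constant along the face, or $\partial_{(i)}\partial_{(i)} v_\infty \equiv 0$, i.e.\ the trace of $v_\infty$ is an affine function on the face.

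The final step is a Liouville argument. On a face $\{x_i = 0\}$ of the Neumann type I subtract $c_i x_i$ with $c_i$ the constant value of $\partial_i v_\infty$ there; on a face of the Dirichlet type I subtract the affine trace, extended to be independent of $x_i$. In either case the subtracted function is harmonic with vanishing Hessian, so $\partial^2 v_\infty$ is unchanged, and the boundary condition becomes genuinely homogeneous. I may then extend $v_\infty$ across $\{x_i = 0\}$ by the Schwarz reflection principle — even reflection in the Neumann case, odd reflection in the Dirichlet case — which preserves harmonicity, at most doubles the Hölder seminorm of the Hessian, and is compatible with the (affine, resp.\ homogeneous) conditions on the remaining faces. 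Iterating over all $m$ faces yields an entire harmonic function on $\mathbb{R}^3$ whose Hessian grows at most like $C|x|^{\alpha}$ with $\alpha < 1$; by the classical Liouville theorem each component of the Hessian is constant, hence identically $\partial^2 v_\infty(0) = 0$, contradicting $|\partial^2 v_\infty(\xi_\infty)| \geqslant \tfrac12$.

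I expect the main obstacle to be the bookkeeping rather than any single hard estimate: one must track precisely which homogeneous boundary condition survives in the blow-up limit on each face, verify that the affine corrections indeed leave $\partial^2 v_\infty$ untouched, and check that the successive reflections across the distinct faces of a wedge remain mutually consistent (they do, because the reflections generate the symmetry group of the wedge and each preserves the affine class of the traces on the other faces). The convergence of the rescaled domains and the compactness of $\{v_j\}$ are, by contrast, routine once the normalization above is in place.
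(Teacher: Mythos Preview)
Your approach is precisely the paper's: Simon's scaling/blow-up reduces \eqref{schauder baby} to a Liouville theorem on the wedge, and that Liouville theorem is proved by odd/even reflection across the Dirichlet/Neumann faces to land on the classical $\mathbb{R}^3$ case. There is, however, a slip in your blow-up step. After you translate so that the limiting domain is the standard $W_m$ with faces at $\{x_i=0\}$, the point at which you normalized $v_\infty,\partial v_\infty,\partial^2 v_\infty$ to vanish is no longer the origin; call it $y_0\in\bar W_m$. The vanishing of $|\partial_{(i)}\partial_i v_\infty|_\alpha$ on $\{x_i=0\}$ then only forces $\partial_{(i)}\partial_i v_\infty$ to be \emph{constant} on that face, not zero --- it is zero only if $y_0$ lies on the face, which you have not arranged. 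So $\partial_i v_\infty$ is in general affine (not constant) on a Neumann face, and the trace of $v_\infty$ is in general a quadratic (not affine) polynomial on a Dirichlet face; the linear correction $c_i x_i$ you propose does not homogenize such data, and your claim that ``the subtracted function is harmonic with vanishing Hessian'' fails.

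The repair is straightforward and does not change the strategy: either subtract from $v_\infty$ a \emph{harmonic quadratic} $P$ matching the affine Neumann / quadratic Dirichlet data on all surviving faces (edge compatibility comes from $v_\infty\in C^2$), and run the reflection argument on $v_\infty-P$; or, closer to Simon's original set-up, take the second-order Taylor polynomial $Q_j$ of $u_j$ not at $p_j$ but at its projection onto the nearest face/edge/corner, so that every surviving face passes through the normalization point in the limit and your ``$\equiv 0$'' claim becomes correct. Either way the Liouville theorem yields that $v_\infty$ is a quadratic polynomial, hence has constant Hessian, and then $\partial^2 v_\infty(y_0)=0$ together with $|\partial^2 v_\infty(\xi_\infty)|\geqslant\tfrac12$ gives the contradiction. (Incidentally, the induction on $k$ you announce is never actually invoked: your Liouville step handles every $W_m$ in one stroke.)
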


\begin{proof}
  This could be proved following the method of {\cite{simon-schauder-1997}}.
  First, one should establish suitable Liouville theorem for harmonic
  functions with homogeneous Dirichlet and Neumann boundary condition along
  faces of $\partial W$. Let $v$ be such a harmonic function with the growth
  rate
  \[ \sup_{B_r \cap \bar{W}_k} |v| \leqslant Cr^{2 + \varepsilon} \]
  for $\varepsilon \in (0, 1)$. One can prove that $v$ is a quadratic
  polynomial. This is done via reflection. By odd (or Schwarz) reflection
  across faces where $v$ vanishes, even reflection across faces where $v$ has
  vanishing normal derivative, one can reduce to the standard Liouville
  theorem. Then one can use the method of scaling to prove \eqref{schauder
  baby}. See {\cite[Theorem 4]{simon-schauder-1997}}.
\end{proof}

We give a local boundary estimate of harmonic functions for some special balls
centered at points of $\partial Q$.

We write the equation $\Delta_g u = 0$ in a local coordinate, we have
\begin{equation}
  \tfrac{1}{\sqrt{g}} \partial_i  (g^{ij}  \sqrt{g} \partial_j u) = g^{ij}
  \partial_i \partial_j u + \tfrac{1}{\sqrt{g}} \partial_i  (g^{ij}  \sqrt{g})
  \partial_j u = : g^{ij} u_{ij} + b_i u_i = 0.
\end{equation}
The coefficient of the second order is given by $g^{ij}$. The local coordinate
system in Lemma \ref{general fermi} then plays the same role as flattening the
boundary in theory of Dirichlet boundary value problems of elliptic equations
if $p$ lies in the open face.

Moreover, it is important in Neumann boundary value problem as well. We
illustrate this by the simplest example $\Delta_g v = f$ in $\mathbb{R}^3_+$
and $\tfrac{\partial v}{\partial \nu} = 0$ along $\partial \mathbb{R}_+^3$.
Here we assume $\partial \mathbb{R}^3_+ = \{x_3 = 0\}$ and temporarily that
$g^{ij}$ are defined on all of $\mathbb{R}^3$. In order to get estimates for
this variable coefficient equation, we have to get estimates for constant
coefficient equation at the boundary $g^{ij} (0) v_{ij} = f'$ and $g^{ij} (0)
v_i \nu_j (0) = 0$. This is the standard procedure of freezing coefficients.
In order to apply the estimates \eqref{schauder baby}, the condition
\eqref{crucial boundary condition} with $i = 3$ is exactly what we need. The
coefficient $g^{ij} (0)$ may not be $\delta^{ij}$ for $1 \leqslant i, j
\leqslant 2$, but one can make a linear change of variables.

Let $z_0$ be a point at the boundary $\partial Q$. In a neighborhood of $z_0$,
there exists a local coordinate system satisfying properties in Lemma
\ref{general fermi}. We define balls using the $\{z_j \}$ coordinates, that is
\begin{equation}
  B_r (z_0) = \{z : |z - z_0 | \leqslant r\}\  \text{for } r > 0.
\end{equation}
We write short $B_r$ for $B_r (z_0)$.

\begin{lemma}
  \label{c2a estimate}We assume that $\bar{B}_{2 \tau}$ does not intersect top
  face $\bar{T}$; and if $z_0$ is on an open face, then $B_{2 \tau}$ does not
  intersect with any edges; if $z_0$ is on an open edge, then $B_{2 \tau}$
  does not contain any vertices. If $u \in C^{2, \alpha} (\bar{Q})$ solves
  \eqref{general mvp} in $B_{2 \tau}$, then
\begin{align}
& \|u\|_{C^{2, \alpha}  (B_{\tau} \cap Q)} \\
\leqslant & C \|u\|_{C^2  (B_{2 \tau} \cap Q)} + |L u|_{C^{\alpha} (B_{2
\tau} \cap Q)} \\
& + C \| \varphi \|_{C^{2, \alpha} (B_{2 \tau} \cap \partial Q)} + C \|
\psi \|_{C^{1, \alpha} (B_{2 \tau} \cap \partial Q)} \label{controlled by
c2} .
\end{align}
\end{lemma}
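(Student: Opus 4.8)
The plan is to prove \eqref{controlled by c2} by the classical localize--freeze--perturb scheme for Schauder estimates (as in \cite{gilbarg-elliptic-1983}, or via the Campanato/scaling route of \cite{simon-schauder-1997}), with the model constant-coefficient estimate supplied by Lemma \ref{simon lemma} on the cones $W_k$. First I would reduce to homogeneous boundary data: choosing a $C^{2,\alpha}$ extension $w$ of the (compatible) boundary data, with $w=\varphi$ on the Dirichlet faces, $\partial w/\partial\nu=\psi$ on the Neumann faces and $\|w\|_{C^{2,\alpha}(B_{2\tau}\cap Q)}\le C(\|\varphi\|_{C^{2,\alpha}}+\|\psi\|_{C^{1,\alpha}})$ (this is exactly the role of the compatibility assumption preceding Theorem \ref{general existence}, and such a $w$ exists because $u$ itself is a $C^{2,\alpha}$ solution realizing the data), and replacing $u$ by $u-w$, it suffices to treat the case $\varphi\equiv0$, $\psi\equiv0$, $Lu=f$ with $f\in C^\alpha(B_{2\tau}\cap Q)$; the new right-hand side has $C^\alpha$ norm controlled by $|Lu|_{C^\alpha}+\|\varphi\|_{C^{2,\alpha}}+\|\psi\|_{C^{1,\alpha}}$.

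Next I would pass to the Fermi coordinates $\{z_j\}$ of Lemma \ref{general fermi}, in which each of the $k$ faces through $z_0$ is a piece of $\{z_i=0\}$, so that (by the hypotheses on $\tau$, which keep $B_{2\tau}$ away from $\bar T$ and from any additional faces, edges or vertices) $B_{2\tau}\cap Q$ is identified with $B_{2\tau}\cap W_k$, $k\in\{1,2,3\}$. In these coordinates $\Delta_g u=g^{ij}u_{ij}+b^iu_i$, and the crucial point is that \eqref{crucial boundary condition} turns the geometric Neumann condition $g^{ij}u_i\nu_j=0$ on $\{z_i=0\}$ into the flat Neumann condition $u_{z_i}=0$ there. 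Freezing the coefficients at $z_0$ and performing an affine change of the $z$-variables that preserves the faces and, again by \eqref{crucial boundary condition}, normalizes $g^{ij}(z_0)$ to $\delta^{ij}$ while keeping the Neumann faces Neumann, the operator becomes $\Delta+(g^{ij}(z)-g^{ij}(z_0))\partial_i\partial_j+(\text{lower order})$, whose principal part is the model operator to which Lemma \ref{simon lemma} applies on $W_k$.

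Then I would run the standard Morrey--Campanato iteration. Comparing $u$ on each $B_r\cap W_k$ with the solution $\bar u$ of $\Delta\bar u=0$ carrying the same homogeneous mixed data on the faces and equal to $u$ on the spherical part, Lemma \ref{simon lemma} together with a scaling argument yields the quadratic-type decay $\int_{B_\rho}|D^2\bar u-(D^2\bar u)_{B_\rho}|^2\le C(\rho/r)^{3+2\alpha}\int_{B_r}|D^2\bar u-(D^2\bar u)_{B_r}|^2$; the difference $u-\bar u$ solves $\Delta(u-\bar u)=f+(g^{ij}(z)-g^{ij}(z_0))\partial_i\partial_j u+(\text{lower order})(u)$ with homogeneous mixed data, and since $g^{ij}$ is Hölder the right-hand side has size $\lesssim r^\alpha(|f|_{C^\alpha}+\|u\|_{C^{2,\alpha}(B_{2\tau}\cap Q)})$ on $B_r$, so an energy (Caccioppoli) bound for $u-\bar u$ feeds the usual hole-filling argument and produces $[D^2u]_{C^\alpha(B_\tau\cap Q)}\le C(\|u\|_{C^2(B_{2\tau}\cap Q)}+|f|_{C^\alpha(B_{2\tau}\cap Q)})$. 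The lower-order terms $b^i\partial_iu+cu$ are absorbed by interpolation ($\|u\|_{C^{1,\alpha}}\le\epsilon\|u\|_{C^{2,\alpha}}+C_\epsilon\|u\|_{C^0}$), at the cost of enlarging the constant and the $C^2$-term; undoing the affine change and adding back $w$ gives \eqref{controlled by c2}.

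The hard part is the model estimate at the edges and the vertex, that is Lemma \ref{simon lemma} for $k=2,3$, which rests on a Liouville theorem for functions on the wedge $W_k$ that are harmonic with homogeneous Dirichlet boundary condition on some of the coordinate faces and homogeneous Neumann condition on the others, subject to subquadratic-plus-$\varepsilon$ growth. As indicated in the proof of Lemma \ref{simon lemma}, this is handled by successive reflection --- Schwarz (odd) reflection across the Dirichlet faces, even reflection across the Neumann faces --- which extends such a function to a harmonic function on all of $\mathbb{R}^3$ of the same growth, hence a quadratic polynomial; the reflections close up consistently precisely because neighboring faces meet at angle $\pi/2$, which is exactly where the dihedral-angle hypothesis on $Q$ enters. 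The only compatibility one must verify is that near every edge or vertex inside $B_{2\tau}$ the boundary conditions are of an admissible combination for the reflections to agree; since $\bar T$ is excluded, near an edge one meets either two Neumann (side) faces or one Neumann face and the Dirichlet bottom $B$, and near the relevant vertex two Neumann faces together with $B$, and all of these are admissible.
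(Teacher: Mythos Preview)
Your proposal is correct but follows a genuinely different route from the paper. The paper works directly with the cutoff $v=\xi u$, applies the seminorm estimate \eqref{schauder baby} of Lemma~\ref{simon lemma} to $v$, and handles the boundary data \emph{in place} through the face seminorms $\mathcal{E}_k(v)$: on a Dirichlet face $\mathcal{E}(v)=\|\xi\varphi\|_{C^{2,\alpha}}$, and on a Neumann face $\{z_1=0\}$ the term $|\partial_{(1)}\partial_1(\xi u)|_\alpha$ is controlled by $\|\psi\|_{C^{1,\alpha}}$ by choosing the cutoff with $\partial_1\xi=0$. Then the paper freezes coefficients, writes $g^{ij}(z_0)v_{ij}=Lv-(g^{ij}-g^{ij}(z_0))v_{ij}-b^iv_i-cv$, and absorbs the variable-coefficient remainder $C\tau^\alpha|\partial\partial v|_\alpha$ by taking $\tau$ small; this yields \eqref{controlled by c2} with no Campanato iteration and no preliminary reduction to homogeneous data. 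By contrast, you first subtract an extension of the boundary data (which requires an extension operator with the stated norm bound, not just the existence of $u$), and then run the Morrey--Campanato comparison-with-model-harmonic scheme together with a Caccioppoli estimate; Lemma~\ref{simon lemma} enters only indirectly, via the Liouville/reflection input that gives the decay for the model solution $\bar u$. The paper's route is shorter and uses the full strength of the boundary term $\mathcal{E}_k$ in \eqref{schauder baby}, at the price of the special cutoff and of tying the smallness parameter to $\tau$; your route is the textbook Campanato argument, more modular and perhaps easier to generalize, at the cost of the extra extension step and the energy machinery.
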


\begin{proof}
  Let $v = \xi u$, where $\xi$ vanishes outside the ball $B_{2 \tau}$. So by
  earlier estimate \eqref{schauder baby}, we have that
  \begin{equation}
    | \partial \partial v|_{\alpha} \leqslant C|g^{ij} (z_0) v_{ij} |_{\alpha}
    + C\mathcal{E} (v) .
  \end{equation}
  We analyze the contribution of $\mathcal{E} (v)$ from top or bottom faces
  first. The contribution from the top face for example is
  \begin{equation}
    \mathcal{E} (v) = \| \xi u\|_{C^{2, \alpha} (T \cap B_{2 \tau})} = \| \xi
    \varphi \|_{C^{2, \alpha} (T \cap B_{2 \tau})} \leqslant C \| \varphi
    \|_{C^{2, \alpha} (\ensuremath{\operatorname{spt}} \xi \cap \partial Q)} .
  \end{equation}
  The contribution from the bottom face is of the same form. And along side
  faces for instance $\{z_1 = 0\}$, $\tfrac{\partial u}{\partial z_1} = \psi$
  and
\begin{align}
& | \partial_{(1)} \partial_1 (\xi u) |_{\alpha} \\
= & | \partial_{(1)} (\partial_1 \xi u) + \partial_{(1)} (\xi \psi)
|_{\alpha} \\
\leqslant & C \| \psi \|_{C^{1, \alpha} (\ensuremath{\operatorname{spt}}
\xi \cap \partial Q)},
\end{align}
  by choosing a special $\xi$ such that $\partial_1 \xi = 0$. A standard
  cutoff would suffice for this purpose. So
  \begin{equation}
    | \partial \partial v|_{\alpha} \leqslant C|g^{ij} (z_0) v_{ij} |_{\alpha}
    + C \|u\|_{C^2  (\mathrm{spt} \xi \cap Q)} . \label{schauder baby 1}
  \end{equation}
  Freezing the coefficients, we have that
  \begin{equation}
    g^{ij} (z_0) v_{ij} = Lv - (g^{ij} - g^{ij} (z_0)) v_{ij} - b_i v_i - c v.
    \label{schauder freezing}
  \end{equation}
  Note that the steps above differ very little from standard methods of
  obtaining Schauder estimates except that we have to keep track of the
  special property of the local coordinate so that we can make use of
  \eqref{schauder baby}. See Lemma \ref{general fermi}. Using that
  $|fh|_{\alpha} \leqslant \|f\|_{L^{\infty}} |h|_{\alpha} + |f|_{\alpha}
  \|h\|_{L^{\infty}}$, and that $g_{ij} \in C^{2, \alpha} (\bar{B}_{2 \tau})$,
  we have that
  \begin{equation}
    | (g^{ij} - g^{ij} (z_0)) v_{ij} |_{\alpha} \leqslant C \tau^{\alpha} |
    \partial \partial v|_{\alpha} + C \| \partial \partial v\|_{L^{\infty} 
    (B_{2 \tau} \cap Q)},
  \end{equation}
  and
  \begin{equation}
    |b^i v_i |_{\alpha} \leqslant C \tau^{\alpha}  \|v_i \|_{L^{\infty}} +
    |v_i |_{\alpha} \|b_i \|_{L^{\infty}} \leqslant C\|v\|_{C^{1, \alpha}
    (B_{2 \tau} \cap Q)} \leqslant C\|v\|_{C^2  (B_{2 \tau} \cap Q)},
  \end{equation}
  and
  \begin{equation}
    |c v|_{\alpha} \leqslant C \|v\|_{L^{\infty} (B_{2 \tau} \cap Q)} + C
    |v|_{C^{\alpha} (B_{2 \tau} \cap Q)} .
  \end{equation}
  By choosing $\tau$ small, so that we can absorb $C \tau^{\alpha}  | \partial
  \partial v|_{\alpha}$ and combining with \eqref{schauder baby 1} and
  \eqref{schauder freezing}, we have that
  \begin{equation}
    | \partial \partial v|_{\alpha} \leqslant C|Lv|_{\alpha} + C \|v\|_{C^2 
    (B_{2 \tau} \cap Q)}, \label{schauder with c2}
  \end{equation}
  where $C$ only also depends on $\alpha$ and $\tau$. Then pick $\xi$ to be
  $\xi = \phi (|x - y|)$ with $\phi = 1$ in $[0, \tau]$, $\phi = 0$ in $[2
  \tau, \infty)$, $\tau | \phi' | + \tau^2 | \phi'' | \leqslant C$, then the
  above estimate gives \eqref{controlled by c2} since $Lu = f$ and
  \begin{equation}
    L v = \xi L u + 2 g^{i j} \partial_i \xi \partial_j u + u (g^{i j}
    \partial_i \partial_j \xi + b^i \partial_i \xi) .
  \end{equation}
  
\end{proof}

Observing the proof above does not yet immediately give a real local boundary
estimate, because there is a special requirement in Lemma \ref{c2a estimate}
on the location of the center $x_0$ of the ball. We have maybe missed some
part of the boundary. We have to show that all balls satisfying the
requirement of Lemma \ref{c2a estimate} still covers $\partial Q$, and hence a
neighborhood of $\partial Q$.

We can cover the vertices first, then along each edge, we only have to cover a
\ segment (not including any vertices) shorter than the entire edge with such
balls. The number of balls to cover all edges we used is finite by
compactness, hence there is a lower bound of their radius. Therefore, we only
have to cover a smaller piece on each face. After this process, we have
covered $\partial Q$. We decrease the radius of each ball if needed. This
covering argument also removes dependence of $C$ on $\tau$ in
\eqref{controlled by c2}. We obtain a global Schauder estimate.

\begin{proposition}
  \label{schauder}If $u \in C^{2, \alpha} (\bar{Q})$ solves \eqref{general
  mvp}, then
  \begin{equation}
    \|u\|_{C^{2, \alpha} (\bar{Q})} \leqslant |L u|_{C^{\alpha} (\bar{Q})} + C
    \|u\|_{L^{\infty} (\bar{Q})} + C \| \varphi \|_{C^{2, \alpha} (\partial
    Q)} + C \| \psi \|_{C^{1, \alpha} (\partial Q)} \label{schauder global}
  \end{equation}
\end{proposition}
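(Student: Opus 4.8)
The plan is to patch the local estimates already in hand into a global one by a finite covering of $\bar Q$, and then eliminate the intermediate $C^2$-norm by interpolation. First I would cover $\bar Q$ by two types of balls: around each interior point a small Euclidean ball compactly contained in $Q$, and around each boundary point a ball $B_\tau$ of the kind allowed in Lemma~\ref{c2a estimate}, expressed in the special coordinates $\{z_j\}$ of Lemma~\ref{general fermi} (centered first at the vertices, then along short subsegments of the edges, then on the remaining pieces of the open faces). The covering discussion in the paragraph just before Proposition~\ref{schauder} guarantees that finitely many such balls suffice and that their radii are bounded below, so all the constants appearing in the local estimates can be taken uniform. On each interior ball I would invoke the classical interior Schauder estimate for $L$ (e.g. \cite[Theorem~6.2]{gilbarg-elliptic-1983}), and on each boundary ball the estimate \eqref{controlled by c2} of Lemma~\ref{c2a estimate}.

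Summing these finitely many inequalities, using that the covering is finite and that the relevant seminorms and norms of $u$, $\varphi$, $\psi$ over the subsets are dominated by the corresponding global quantities, produces
\[
  \|u\|_{C^{2,\alpha}(\bar Q)} \leqslant C\,|Lu|_{C^{\alpha}(\bar Q)} + C\,\|u\|_{C^2(\bar Q)} + C\,\|\varphi\|_{C^{2,\alpha}(\partial Q)} + C\,\|\psi\|_{C^{1,\alpha}(\partial Q)}.
\]
To reach \eqref{schauder global} I would then remove $\|u\|_{C^2(\bar Q)}$. Since $Q$ is bi-Lipschitz to a Euclidean cube it satisfies an interior cone condition, so the interpolation inequality
\[
  \|u\|_{C^2(\bar Q)} \leqslant \epsilon\,\|u\|_{C^{2,\alpha}(\bar Q)} + C_\epsilon\,\|u\|_{L^\infty(\bar Q)}
\]
holds for every $\epsilon>0$ (this is the standard interpolation of intermediate Hölder norms, cf. \cite[Lemma~6.32]{gilbarg-elliptic-1983}). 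Picking $\epsilon$ small enough that $C\epsilon<\tfrac12$ lets me absorb $C\epsilon\|u\|_{C^{2,\alpha}(\bar Q)}$ into the left-hand side, which gives exactly \eqref{schauder global} after renaming constants.

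The main obstacle here is bookkeeping rather than a single deep step: I must be sure that every boundary ball in the cover simultaneously covers $\partial Q$ and obeys the location constraints of Lemma~\ref{c2a estimate} ($\bar B_{2\tau}$ missing $\bar T$, not meeting an edge when centered on an open face, not containing a vertex when centered on an open edge), and that changing to the Lemma~\ref{general fermi} coordinates in each chart does not destroy the uniformity of the constants — this is where the corner structure of the cube really enters. The interpolation step likewise relies on the domain-with-corners version of the inequality, which is standard for Lipschitz domains but worth a sentence of justification. Everything else is the usual freezing-of-coefficients and cutoff argument already carried out in Lemma~\ref{c2a estimate}.
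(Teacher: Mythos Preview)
Your proposal is correct and follows essentially the same route as the paper: patch the boundary estimate \eqref{controlled by c2} with the interior Schauder estimate over the finite cover described just before Proposition~\ref{schauder}, then apply the global interpolation inequality $\|u\|_{C^2(\bar Q)} \leqslant \varepsilon\|u\|_{C^{2,\alpha}(\bar Q)} + C_\varepsilon\|u\|_{L^\infty(\bar Q)}$ with $\varepsilon$ small to absorb the $C^2$ term. Your extra remarks on the cover (vertices first, then edges, then faces) and on the cone condition needed for interpolation only make explicit what the paper leaves implicit.
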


\begin{proof}
  By patching \eqref{controlled by c2} up with interior Schauder estimates we
  have
  \begin{equation}
    \|u\|_{C^{2, \alpha} (\bar{Q})} \leqslant C \|u\|_{L^{\infty} (\bar{Q})} +
    C\|u\|_{C^2 (\bar{Q})} + C \| \varphi \|_{C^{2, \alpha} (\partial Q)} + C
    \| \psi \|_{C^{1, \alpha} (\partial Q)}
  \end{equation}
  Applying the global interpolation inequality
  \begin{equation}
    \|u\|_{C^2 (\bar{Q})} \leqslant \varepsilon \|u\|_{C^{2, \alpha}
    (\bar{Q})} + C_{\varepsilon} \|u\|_{L^{\infty} (\bar{Q})}
  \end{equation}
  and choosing $\varepsilon > 0$ sufficiently small, we can eliminate
  $\|u\|_{C^2 (\bar{Q})}$ and get \eqref{schauder global}.
\end{proof}

\begin{remark}
  It is possible to bound $\|u\|_{C^{2, \alpha} (B_{2 \tau} \cap Q)}$ by
  $\sup_{B_{2 \tau} \cap Q} |u|$, $|L u|_{C^{\alpha} (B_{2 \tau} \cap
  \bar{Q})}$, $\| \varphi \|_{C^{2, \alpha} (B_{2 \tau} \cap \partial Q)}$ and
  $C \| \psi \|_{C^{1, \alpha} (B_{2 \tau} \cap \partial Q)}$.
\end{remark}

\subsection{Solvability on model domains}

Let the (open) half ball be $B_+ = \{x : |x| < 1, x_1 > 0\}$. We solve
\begin{equation}
  \Delta u = f \text{ in } B_+, \tfrac{\partial u}{\partial x_1} = f_1 \text{
  on } \{|x| \leqslant 1, x_1 = 0\}, u = f_2 \text{ on } \{|x| = 1, x_1
  \geqslant 1\} . \label{mvp half ball}
\end{equation}
Without loss of generality, we assume that $f_2 = 0$. Let $G (x, y)$ be the
Green function of the three dimensional unit ball, then by reflection method,
we can write down explicitly that
\begin{equation}
  G (x, y) = - \frac{1}{4 \pi |x - y|} + \frac{1}{4 \pi |y|  |x - \bar{y} |}
\end{equation}
where $\bar{y} = \tfrac{y}{|y|^2}$ is the spherical reflection with respect to
the unit sphere and denote
\begin{equation}
  \tilde{y} = (- y_1, y_2, y_3), \hat{y} = (y_1, - y_2, y_3) .
\end{equation}
Let $\tilde{G} (x, y) = G (x, y) + G (x, \tilde{y})$, so $\Delta_x \tilde{G}
(x, y) = \delta (x - y)$, and $\tfrac{\partial \tilde{G}}{\partial x_1} = 0$
along $B \cap \{x_1 = 0\}$.

\begin{lemma}
  \label{half ball mvp solvability}We have that the solution $u$ to \eqref{mvp
  half ball} on half ball can be represented as
  \begin{equation}
    u (x) = \int_{B +} \tilde{G} (x, y) f (y) \mathrm{d} y + \int_{B \cap
    \{x_1 = 0\}} \tilde{G} (x, y) f_1 (y) \mathrm{d} y, \label{half ball
    explicit solution}
  \end{equation}
  and $u \in C^{2, \alpha} (\bar{B}_+)$.
\end{lemma}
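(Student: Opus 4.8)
The plan is to verify that the claimed formula does define a solution of \eqref{mvp half ball}, using the three characteristic properties of the modified kernel, and then to obtain the $C^{2,\alpha}$ regularity by a reflection argument that reduces everything to classical interior Schauder theory, to the Dirichlet case on a smooth face, and to the one–face Neumann case already contained in Lemma \ref{simon lemma} and Lemma \ref{c2a estimate}. First I would record the properties of $\tilde G(x,y) = G(x,y) + G(x,\tilde y)$ on $B_+$: (i) $\Delta_x \tilde G(\cdot,y) = \delta_y$ in $B_+$, since $\tilde y \notin \bar B_+$ contributes only a term harmonic in $x$ there; (ii) $\tilde G(x,y) = 0$ for $|x| = 1$, which follows from the classical identity $|x - z| = |z|\,|x - \bar z|$ on the unit sphere applied to $z = y$ and $z = \tilde y$ (using $|\tilde y| = |y|$ and $\overline{\tilde y} = \widetilde{\bar y}$); and (iii) $\partial_{x_1}\tilde G(x,y) = 0$ on $B \cap \{x_1 = 0\}$, which is the stated even symmetry $G(\tilde x, y) = G(x, \tilde y)$. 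These are exactly the facts saying that $\tilde G$ is the Green function for the mixed problem.

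Next I would split $u = u_1 + u_2$ according to the two integrals, assuming $f \in C^{\alpha}(\bar B_+)$ and $f_1 \in C^{1,\alpha}$ as in \eqref{general mvp}. For the volume term $u_1(x) = \int_{B_+}\tilde G(x,y) f(y)\,\mathrm{d}y$, even–reflect $f$ across $\{x_1 = 0\}$ to $\bar f \in C^{\alpha}(\bar B)$; then $u_1 = \bigl(\int_B G(\cdot,y)\bar f(y)\,\mathrm{d}y\bigr)\big|_{\bar B_+}$ is the Newtonian potential of a $C^{\alpha}$ density, hence solves $\Delta u_1 = f$ in $B_+$, vanishes on $\{|x| = 1\}$, and has $\partial_{x_1}u_1 = 0$ on $\{x_1 = 0\}$ by evenness. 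For the boundary term $u_2(x) = \int_{B \cap \{x_1 = 0\}}\tilde G(x,y) f_1(y)\,\mathrm{d}y$, note that on the diametral disk one has $\tilde y = y$, so $\tilde G(x,y) = -\tfrac{1}{2\pi|x - y|} + (\text{smooth in } x)$; thus $u_2$ is, up to a harmonic correction, twice a single–layer potential on the disk, is harmonic in $B_+$, vanishes on $\{|x| = 1\}$, and by the classical jump relation for single–layer potentials — the doubled kernel precisely cancelling the factor $\tfrac12$ in the jump of the normal derivative — satisfies $\partial_{x_1}u_2 = f_1$ on $\{x_1 = 0\}$. Hence $u = u_1 + u_2$ solves \eqref{mvp half ball}.

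Finally I would establish $u \in C^{2,\alpha}(\bar B_+)$ by local estimates. Near the open spherical part the Dirichlet datum is $0 \in C^{2,\alpha}$ and $\Delta u = f \in C^{\alpha}$, so $u$ is $C^{2,\alpha}$ there by classical Dirichlet Schauder theory; near the open flat part $\partial_{x_1} u = f_1 \in C^{1,\alpha}$ and $\Delta u = f \in C^{\alpha}$, so $u$ is $C^{2,\alpha}$ there by the one–face Neumann Schauder estimate behind Lemma \ref{simon lemma} (even reflection across $\{x_1 = 0\}$). The only delicate point is the corner circle $\{|x| = 1,\ x_1 = 0\}$; but the diametral disk meets $\partial B$ orthogonally, so after flattening the spherical face locally one is in a genuine right–angle wedge, and the reflection scheme of the proof of Lemma \ref{simon lemma} (odd reflection in the Dirichlet face, even reflection in the Neumann face) reduces the estimate there to the interior case, giving $C^{2,\alpha}$ up to the corner. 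Patching these estimates yields $u \in C^{2,\alpha}(\bar B_+)$. I expect the main obstacle to be the bookkeeping of the single–layer jump relation so that the Neumann condition emerges with the correct constant, together with the corner estimate; both are classical but require care with the orthogonality of the two boundary pieces.
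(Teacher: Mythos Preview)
Your argument is correct and the two halves (verification of the boundary conditions via the single-layer jump, and the treatment of the volume term by even reflection of $f$) are essentially the same as in the paper. Where you genuinely diverge is in the regularity of the boundary piece $u_2$ up to the corner circle $\{|x|=1,\ x_1=0\}$. The paper does not use local Schauder estimates or any wedge reflection there at all. Instead it \emph{removes the corner}: since $f_2=0$ forces $f_1=0$ on $\{|x|=1,\ x_1=0\}$, one extends $f_1$ to all of $\{x_1=0\}$ by the Kelvin transform $\bar f_1(x)=-|x|^{-1}f_1(x/|x|^2)$ for $|x|\ge 1$, checks that $\bar f_1\in C^{1,\alpha}(\{x_1=0\})$, and then a change of variables rewrites $\int_{\{|y|\le 1,\,y_1=0\}}\tilde G(x,y)f_1(y)\,\mathrm{d}y$ exactly as the half-space Neumann single-layer potential $\int_{\{y_1=0\}}\psi(x,y)\bar f_1(y)\,\mathrm{d}y$ with $\psi(x,y)=-\tfrac{1}{4\pi|x-y|}-\tfrac{1}{4\pi|x-\tilde y|}$. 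Half-space Neumann regularity then gives $u_2\in C^{2,\alpha}(\bar B_+)$ in one stroke, with no corner to discuss.

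Your route --- flatten the spherical face, land in a right-angle wedge, and appeal to the mixed Dirichlet--Neumann estimate of Lemma~\ref{simon lemma} via odd/even reflection --- also works, and has the virtue of reusing the general machinery the paper builds for the cube. The cost is that Lemma~\ref{simon lemma} is an \emph{a priori} estimate, so strictly speaking you still owe an approximation or a weak-solution-plus-reflection argument to upgrade it to a regularity statement at the corner; you gesture at this but do not carry it out. The paper's Kelvin trick sidesteps that bookkeeping entirely, at the price of being special to the ball. Either way the result is the same; your proof is a legitimate alternative, just slightly heavier at the one point the paper handles by an explicit identity.
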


\begin{proof}
  First, the solution \eqref{half ball explicit solution} follows easily from
  Green's formula. We only have to show the $C^{2, \alpha}$ regularity of $u$.
  Performing an even reflection of $f$ across the plane $\{x_1 = 0\}$, we get
  a reflected $\tilde{f} \in C^{\alpha} (\bar{B})$. By a change of variables,
\begin{align}
& \int_{B +} \tilde{G} (x, y) f (y) \mathrm{d} y \\
= & \int_{B_+} G (x, y) f (y) \mathrm{d} y + \int_{B_-} G (x, y) f
(\tilde{y}) \mathrm{d} y \\
= & \int_B G (x, y) \tilde{f} (y) \mathrm{d} y \in C^{2, \alpha}
(\bar{B}_{+, +}) .
\end{align}
  Since $g$ vanishes on $\{|x| = 1, x_1 = 0\}$, extend $f_1$ to all of $\{x_1
  = 0\}$ by setting $\bar{f}_1 (x)$ to be $- |x|^{- 1} f_1 (\tfrac{x}{|x|^2})$
  if $|x| \geqslant 1$ (This is the Kelvin transform in dimension three). We
  denote the extended version by $\bar{f}_1$. It is not difficult to verify
  that $\bar{f}_1 \in C^{1, \alpha} (\{x_1 = 0\})$. Indeed, $\bar{f}_1$
  vanishes along $\{|x| = 1, x_1 = 0\}$ since $f_1$ has to satisfy the
  compatibility condition, so we have $\bar{f}_1$ is continuous. By direct
  calculation if $|x| \geqslant 1$,
  \begin{equation}
    x^i \partial_i \bar{f}_1 = |x|^{- 1} f_1 (\tfrac{x}{|x|^2}) + |x|^{- 3}
    x^i \partial_j f_1 (\tfrac{x}{|x|^2}) \label{radial derivative kelvin
    transform}
  \end{equation}
  We see that $x^i \partial_i \bar{f}_1 = x^i \partial_i f_1$ on $\{|x| = 1,
  x_1 = 0\}$. And derivatives of $\bar{f}_1$ along tangential direction of
  $\{|x| = 1, x_1 = 0\}$ vanishes, this implies that $\bar{f}_1 \in C^1 (\{x_1
  = 0\})$. The H{\"o}lder continuity of $x^i \partial_i \bar{f}_1$ 
  readily follows from
  \begin{equation}
    |x^i \partial_i f_1 (x) - z^i \partial_i f_1 (z) | \leqslant C |x -
    z|^{\alpha}
  \end{equation}
  for each $z \in \{|x| = 1, x_1 = 0\}$, $x \in \{|x| \leqslant 1, x_1 = 0\}$
  and \eqref{radial derivative kelvin transform}. Other derivatives are
  similar proved. We use again a change of variables, and
\begin{align}
& \int_{\{|y| \leqslant 1, y_1 = 0\}} \tilde{G} (x, y) f_1 (y) \mathrm{d}
y \\
= & \int_{\{|y| \leqslant 1, y_1 = 0\}} \psi (x, y) f_1 (y) \mathrm{d} y
\\
& + \int_{\{|y| \leqslant 1, y_1 = 0\}} (\dfrac{1}{4 \pi |y| |x - \bar{y}
|} + \dfrac{1}{4 \pi | \tilde{y} | |x - \overline{\tilde{y}} |}) f_1 (y)
\mathrm{d} y \\
= & \int_{\{|y| \leqslant 1, y_1 = 0\}} \psi (x, y) f_1 (y) \mathrm{d} y
\\
& + \int_{\{|y| \geqslant 1, y_1 = 0\}} (\dfrac{1}{4 \pi | \bar{y} | |x -
y|} + \dfrac{1}{4 \pi | \bar{y} | |x - \tilde{y} |}) f_1 (\bar{y})
(\tfrac{1}{|y|^2} \mathrm{d} y) \\
= & \int_{\{y_1 = 0\}} \psi (x, y) \bar{f}_1 (y) \mathrm{d} y.
\end{align}
  We conclude from Schauder regularity of Neumann problem for half space that
  this contribution of $u$ is also in $C^{2, \alpha} (\bar{B}_+)$. Here
  \[ \psi (x, y) = - \tfrac{1}{4 \pi |x - y|} - \tfrac{1}{4 \pi |x - \tilde{y}
     |} \]
  is the Neumann Green function for the half space, and we have used that
  Kelvin transform and reflection across $\{y_1 = 0\}$ commutes, and that $|
  \tilde{y} | = |y|$. 
\end{proof}

The problem \eqref{mvp half ball} illustrates the reflection principle of how
to handle the mixed boundary value problems. Now we turn to a slightly more
complicated model. Let $B_{+, +}$ be the (open) quarter ball
\[ \{x : |x| < 1, x_1 > 0, x_2 > 0\}, \]
and $\Sigma_1 = \{|x| \leqslant 1, x_1 = 0, x_2 \geqslant 0\}$, $\Sigma_2 =
\{|x| \leqslant 1, x_1 \geqslant 0, x_2 = 0\}$. Again, using reflection, we
have,

\begin{lemma}
  \label{solvability quarter ball euclidean}Assume that there exists a
  function $v \in C^{2, \alpha} (\bar{B}_{+, +})$ with $\tfrac{\partial
  v}{\partial x_i} = f_i$ on $\Sigma_i$ and $v = f_3$ on $\{|x| = 1, x_1
  \geqslant 0, x_2 \geqslant 0\}$. There exists a unique solution $u$ in
  $C^{2, \alpha} (\bar{B}_{+, +})$.
  \begin{equation}
    \Delta u = f \text{ in } B_{+, +}, \tfrac{\partial u}{\partial x_i} = f_i
    \text{ on } \Sigma_i, u = f_3 \text{ on } \{|x| = 1, x_1 \geqslant 0, x_2
    \geqslant 0\} . \label{mvp parametrized by t under delta}
  \end{equation}
\end{lemma}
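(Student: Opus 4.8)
The plan is to reduce to a homogeneous problem and then solve it by iterating the reflection principle already used for the half ball in Lemma~\ref{half ball mvp solvability}, now across the two coordinate hyperplanes $\{x_1=0\}$ and $\{x_2=0\}$ that carry the Neumann data. First, subtracting the given extension, $w:=u-v$ turns \eqref{mvp parametrized by t under delta} into the equivalent problem of finding $w\in C^{2,\alpha}(\bar B_{+,+})$ with $\Delta w=f-\Delta v=:\tilde f\in C^{\alpha}(\bar B_{+,+})$ in $B_{+,+}$, $\partial w/\partial x_i=0$ on $\Sigma_i$ for $i=1,2$, and $w=0$ on the spherical face; this is exactly what the hypothesis on $v$ is for, since it bakes in the compatibility of the three boundary data along the edges. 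So we may assume $f_1=f_2=f_3=0$.

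Next I would write down the corner Green function. With $\tilde y=(-y_1,y_2,y_3)$, $\hat y=(y_1,-y_2,y_3)$ and $\widetilde{\hat y}=(-y_1,-y_2,y_3)$, set
\[
  \hat G(x,y)=G(x,y)+G(x,\tilde y)+G(x,\hat y)+G(x,\widetilde{\hat y}),
\]
where $G$ is the Dirichlet Green function of the unit ball. Using $|y|\,|x-\bar y|=|x-y|$ on $\{|x|=1\}$ one sees each of the four terms, hence $\hat G$, vanishes for $|x|=1$; pairing the terms into $\{x_1=0\}$-symmetric and $\{x_2=0\}$-symmetric pairs gives $\partial_{x_1}\hat G=0$ on $\{x_1=0\}$ and $\partial_{x_2}\hat G=0$ on $\{x_2=0\}$; and for $y\in B_{+,+}$ the three reflected poles lie outside $B_{+,+}$, so $\Delta_x\hat G(\cdot,y)=\delta(\cdot-y)$ there. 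By Green's identity all boundary terms drop out and $w(x)=\int_{B_{+,+}}\hat G(x,y)\,\tilde f(y)\,\mathrm{d}y$ solves the homogeneous problem.

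For regularity, even-reflect $\tilde f$ first across $\{x_1=0\}$ and then across $\{x_2=0\}$ to get $\bar f\in C^{\alpha}(\bar B)$; a change of variables in each of the four integrals collapses the representation to the single Newtonian-type integral $w(x)=\int_B G(x,y)\bar f(y)\,\mathrm{d}y$, which belongs to $C^{2,\alpha}(\bar B)$ by the classical Schauder estimate for the Dirichlet Green function of a ball. Restricting to $\bar B_{+,+}$ and adding back $v$ gives the solution $u\in C^{2,\alpha}(\bar B_{+,+})$. For uniqueness, apply the same two reflections to a solution of the homogeneous problem: the reflection is genuinely $C^{2}$ across each hyperplane because $\partial_{x_i}w\equiv 0$ on $\Sigma_i$ forces its tangential derivatives, hence the mixed second derivatives $\partial_i\partial_j w$, to vanish there as well, so they match across the gluing; the result is a bounded harmonic function on $B$ vanishing on $\partial B$, hence $w\equiv 0$. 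Alternatively, the Hopf boundary point lemma (Lemma~\ref{hopf maximum principle}) applies verbatim at the edge $\Sigma_1\cap\Sigma_2$ and rules out a boundary maximum anywhere except on the spherical face, giving the same conclusion.

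The only real work is the bookkeeping of the two orthogonal reflections and the regularity up to the codimension-two edge $\{x_1=x_2=0\}$: one must check that the iterated even reflection of a $C^{\alpha}$ function is again $C^{\alpha}$ on the whole ball (so the potential estimate never sees the corner) and, for uniqueness, that the reflected solution is $C^{2}$ rather than merely continuous across that edge. A more self-contained route paralleling the half-ball proof would instead keep the inhomogeneous Neumann data and extend $f_1,f_2$ off $\Sigma_1,\Sigma_2$ by Kelvin transforms across both planes, the delicate point there being that the doubly-extended data is $C^{1,\alpha}$ on the full plane; the reduction via $v$ conveniently sidesteps this.
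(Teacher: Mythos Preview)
Your argument is correct and in fact cleaner than the paper's. Both proofs write down the same four-term Green function $\hat G$ and both handle the interior source $f$ by the double even reflection that collapses $\int_{B_{+,+}}\hat G\,f$ to $\int_B G\,\bar f$; but the paper only uses the extension hypothesis to kill the Dirichlet datum $f_3$, and then spends the rest of the proof dealing with the surviving Neumann data $f_1,f_2$ by Kelvin-extending them off $\Sigma_i$, introducing an antiderivative $w$ with $\partial_{x_1}w=f_1$, and splitting what remains into two auxiliary problems solved by further reflections. You instead subtract the given $v$ at the outset, which zeroes all three boundary data simultaneously and reduces everything to the single potential $\int_B G\,\bar f$; this exploits the compatibility hypothesis to its full extent and avoids the Kelvin/antiderivative bookkeeping entirely. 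Your final paragraph even anticipates the paper's route and correctly identifies the delicate step there (the $C^{1,\alpha}$ regularity of the doubly Kelvin-extended Neumann data). The only points to firm up are the ones you already flag: that iterated even reflection of a $C^\alpha$ function remains $C^\alpha$ across the codimension-two edge (routine via the triangle inequality through reflected points), and for uniqueness that the doubly reflected harmonic function extends across $\{x_1=x_2=0\}$, which follows either from your $C^2$-matching observation or simply from the removable-singularity theorem for bounded harmonic functions off a set of codimension two.
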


\begin{proof}
  The boundary of the quarter ball has two point corners $\mathbf{v}_{\pm} =
  \{x_3 = \pm 1, x_1 = x_2 = 0\}$ and three edge corners which are
  $\mathbf{E}_1 = \{x_2 = 0, x_1 \geqslant 0, |x| = 1\}$, $\mathbf{E}_1 =
  \{x_1 = 0, x_2 \geqslant 0, |x| = 1\}$ and $\mathbf{E}_3 = \{- 1 \leqslant
  x_3 \leqslant 1, x_1 = 0, x_2 = 0\}$.
  
  By subtracting from $u$ a $C^{2, \alpha} (\bar{B}_{+, +})$ function, we can
  assume that $f_3 = 0$. The Green function to the problem is
  \begin{equation}
    \hat{G} (x, y) = \tilde{G} (x, y) + \tilde{G} (x, \hat{y}) .
  \end{equation}
  We can perform even reflection on $f$ two times, we can subtract $\int_B G
  (x, y) f (y) \mathrm{d} y$ from $u$, so we are reduce to the case that $f =
  0$.
  
  We extend $f_1$ to $\{x_1 = 0, x_2 \geqslant 0\}$ and $f_2$ to $\{x_1
  \geqslant 0, x_2 = 0\}$ similarly as Lemma \ref{half ball mvp solvability} \
  using the Kelvin transform. It is easily checked that $\partial_2 f_1 =
  \partial_1 f_2$ along $\{x_1 = x_2 = 0\}$. Let $v$ be a function such that
  $\tfrac{\partial w}{\partial x_1} = f_1$, then we see that $u - v$ satisfies
  the equation
\begin{align}
\Delta (u - w) & = - \Delta w \text{ in } \{x_1 \geqslant 0, x_2 \geqslant
0\}, \\
\tfrac{\partial (u - w)}{\partial x_1} & = 0 \text{ on } \{x_1 = 0, x_2
\geqslant 0\}, \\
\tfrac{\partial (u - w)}{\partial x_2} & = f_2 - \tfrac{\partial
w}{\partial x_2} \text{ on } \{x_1 \geqslant 0, x_2 = 0\} .
\end{align}
  We solve separately by reflection technique
\begin{align}
\Delta w_1 & = - \Delta w \text{ in } \{x_1 \geqslant 0, x_2 \geqslant
0\}, \\
\tfrac{\partial w_1}{\partial x_1} & = 0 \text{ on } \{x_1 = 0, x_2
\geqslant 0\}, \\
\tfrac{\partial w_1}{\partial x_2} & = 0 \text{ on } \{x_1 \geqslant 0,
x_2 = 0\},
\end{align}
  and
\begin{align}
\Delta w_2 & = 0 \text{ in } \{x_1 \geqslant 0, x_2 \geqslant 0\}
\\
\tfrac{\partial w_2}{\partial x_1} & = 0 \text{ on } \{x_1 = 0, x_2
\geqslant 0\} \\
\tfrac{\partial w_2}{\partial x_2} & = f_2 - \tfrac{\partial v}{\partial
x_2} \text{ on } \{x_1 \geqslant 0, x_2 = 0\} .
\end{align}
  We see then $u = v + w_1 + w_2$ by linearity and it is easily verified to be
  in $C^{2, \alpha} (\bar{B}_{+, +})$.
\end{proof}

Now we are ready to establish the following more general existence theorem.

We assume that $g$ is a metric on the quarter ball $B_{+, +}$ satisfying that
$g^{11} = 1$, $g^{12} = g^{13} = 0$ on $\Sigma_1$ and $g^{22} = 1$, $g^{21} =
g^{23} = 0$ on $\Sigma_2$, and the spherical piece of $\partial B_{+, +}$ meet
other pieces $\Sigma_i$ with a constant contact angle $\tfrac{\pi}{2}$. This
condition is for the sake of compatibility of two Neumann boundary conditions
along $\{|x| \leqslant 1, x_1 = 0, x_2 = 0\}$ as we shall see later. This is
sufficient for our use, see {\cite[Lemma 2.2]{li-dihedral-2020}}.

\begin{lemma}
  \label{mvp L}There exists a unique solution $u \in C^{2, \alpha} (B_{+, +})$
  if $c \leqslant 0$ to the mixed boundary value problem
  \begin{equation}
    L u = f \text{ in } B_{+, +}, \tfrac{\partial u}{\partial x_i} = f_i
    \text{ on } \Sigma_i, u = f_3 \text{ on } \{|x| = 1, x_1 \geqslant 1\} .
    \label{mvp problem quarter ball L}
  \end{equation}
\end{lemma}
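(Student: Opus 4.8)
The plan is to prove Lemma \ref{mvp L} by the method of continuity, connecting the variable-coefficient operator $L$ on the quarter ball to the constant-coefficient Laplacian for which solvability is already established in Lemma \ref{solvability quarter ball euclidean}. First I would reduce to homogeneous data: using the assumed $C^{2,\alpha}$ function $u'$ that matches $\varphi$ on $T\cup B$ and $\psi$ on $F$ (here, the function realizing $f_i$ on $\Sigma_i$ and $f_3$ on the spherical part), I replace $u$ by $u-u'$, so it suffices to solve $Lu=\tilde f$ with zero Dirichlet data on the spherical face and zero Neumann data on $\Sigma_1,\Sigma_2$. Then I would set up the family of operators $L_t = (1-t)\Delta + tL$ for $t\in[0,1]$, together with the family of boundary operators interpolating the Neumann conditions; note that the hypotheses $g^{11}=1$, $g^{12}=g^{13}=0$ on $\Sigma_1$ (and symmetrically on $\Sigma_2$) and the $\pi/2$ contact angle guarantee that at every stage $t$ the oblique-derivative boundary condition is compatible in the sense required by Lemma \ref{simon lemma} and the local estimate of Lemma \ref{c2a estimate}, and that the two Neumann conditions remain compatible along the edge $\{x_1=x_2=0\}$.

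The continuity method requires two ingredients: (i) a uniform a priori estimate $\|u\|_{C^{2,\alpha}(\bar B_{+,+})}\le C(\|L_t u\|_{C^\alpha} + \text{boundary data})$ with $C$ independent of $t$, and (ii) solvability at $t=0$. For (ii) I invoke Lemma \ref{solvability quarter ball euclidean} directly. For (i), the global Schauder estimate is essentially Proposition \ref{schauder} applied to the quarter ball: patching the local estimates of Lemma \ref{c2a estimate} at interior points, open-face points, open-edge points and the two corner vertices covers $\bar B_{+,+}$, and one absorbs the lower-order $\|u\|_{C^2}$ term by the interpolation inequality, leaving $\|u\|_{C^{2,\alpha}}\le C(|L u|_{C^\alpha}+\|u\|_{L^\infty}+\text{data})$. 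The key point is that the constant is uniform in $t$ because the coefficients of $L_t$ lie in a bounded set of $C^\alpha$ (resp.\ $C^{2,\alpha}$ for the leading coefficients) and the ellipticity constant of $g^{ij}_t=(1-t)\delta^{ij}+tg^{ij}$ is bounded below uniformly. To eliminate the $\|u\|_{L^\infty}$ term one needs the maximum principle: since $c\le 0$, the weak maximum principle together with the Hopf-type boundary point Lemma \ref{hopf maximum principle} (which rules out a positive maximum on open faces and open edges, and hence at vertices by a further reflection) forces $\sup|u|$ to be controlled by $\sup|\tilde f|$ and the boundary data; this is exactly where $c\le0$ is used.

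With the uniform estimate in hand, the set $S=\{t\in[0,1]: L_t u=f,\ \partial u/\partial x_i=f_i,\ u|_{\text{sphere}}=f_3 \text{ is solvable in }C^{2,\alpha}(\bar B_{+,+})\}$ is nonempty (contains $0$), open (the linearization at a solution is invertible by the estimate plus Fredholm theory for the oblique-derivative problem on the wedge domain), and closed (a $C^{2,\alpha}$-bounded sequence of solutions with parameters $t_n\to t$ has a convergent subsequence by Arzelà–Ascoli, and the limit solves the problem at parameter $t$). Hence $1\in S$, which is the assertion. Uniqueness follows from the maximum principle applied to the difference of two solutions, again using $c\le0$.

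The main obstacle I expect is the behaviour at the corner vertices $\mathbf v_\pm=\{x_1=x_2=0,\,x_3=\pm1\}$, where the spherical Dirichlet face meets the two Neumann faces $\Sigma_1,\Sigma_2$ at a mixed Dirichlet–Neumann–Neumann corner. The $\pi/2$ contact angle hypothesis is precisely what makes the triple reflection (odd across the Dirichlet face, even across each $\Sigma_i$) produce a solution of a standard problem across the corner without creating singular terms, so that the Liouville-type theorem underlying Lemma \ref{simon lemma} applies and one genuinely gets $C^{2,\alpha}$ regularity up to the vertex. Verifying that the reflected data are consistent (i.e.\ that the compatibility $\partial_2 f_1=\partial_1 f_2$ on the edge, together with $f_1$ Neumann/$f_3$ Dirichlet compatibility at the vertex, is exactly what the contact-angle condition encodes) is the delicate bookkeeping step; everything else is a routine adaptation of the classical continuity-method proof.
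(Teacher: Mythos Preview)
Your approach is essentially the same as the paper's: both run the method of continuity along $L_t=(1-t)\Delta+tL$, invoke Lemma~\ref{solvability quarter ball euclidean} at $t=0$, and use the global Schauder estimate of Proposition~\ref{schauder} (with the maximum principle supplying the $L^\infty$ control, which is where $c\le 0$ enters). The only notable difference is that the paper establishes openness of $S$ by an explicit contraction-mapping argument (writing $L_{t_0}v=f+(L_{t_0}-L_t)z$ and showing the map $z\mapsto v$ contracts for $|t-t_0|$ small), which is more self-contained than your appeal to Fredholm theory on a wedge domain; in practice the standard method-of-continuity lemma makes your version work just as well.
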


\begin{proof}
  As before, we can assume that $f_3 = 0$. We use the method of continuity.
  Denote $g_0$ be the metric $g$ and let $S$ be the set of all $t \in [0, 1]$
  such that the problem parametrized by $t$
  \begin{equation}
    L_t u = f \text{ in } B_{+, +}, \tfrac{\partial u}{\partial x_i} = f_i
    \text{ on } \Sigma_i, u = 0 \text{ on } \{|x| = 1, x_1 \geqslant 1\}
    \label{mvp parametrized by t}
  \end{equation}
  is solvable where the operators $L_t = (1 - t) \Delta + t L$. We define the
  metric $g_t$ to be the inverse of $t g^{i j} + (1 - t) \delta^{i j}$ and we
  see that the assumptions on $g$ are preserved by such a path $g_t$.
  
  We see first that $0 \in S$ by Lemma \ref{solvability quarter ball
  euclidean}. From Schauder estimate \eqref{schauder global}, the set $S$ is
  closed. It remains to show that $S$ is open.
  
  We fix $t_0 \in S$. Then at $t$, for every $z \in C^{2, \alpha} (\bar{B}_{+,
  +})$ with $z = 0$ on $\{|x| = 1, x_1 \geqslant 1\}$, we define the following
  problem
  \begin{equation}
    L_{t_0} v = f + L_{t_0} z - L_t z \text{ in } B_{+, +}, \tfrac{\partial
    v}{\partial x_i} = f_i \text{ on } \Sigma_i, v = 0 \text{ on } \{|x| = 1,
    x_1 \geqslant 1\} .
  \end{equation}
  By assumption that $t_0 \in S$, there exists a solution in $C^{2, \alpha}
  (\bar{B}_{+, +})$ which denote by $v = A z$. We wish to find a fixed point
  $v = A v$, which gives a solution of \eqref{mvp parametrized by t} in the
  metric $g_t$.
  
  We show that $A$ is a contraction operator for all $t$ with $|t - t_0 | <
  \varepsilon$ where $\varepsilon$ will be determined later. Let $v_i = A z_i$
  where $z_i \in C^{2, \alpha} (\bar{B}_{+, +})$. The difference $v_1 - v_2$
  is a solution to the problem
\begin{align}
L_{t_0} (v_1 - v_2) & = (L_t - L_{t_0}) (z_1 - z_2) \text{ in } B_{+, +},
\\
\tfrac{\partial v}{\partial x_i} & = 0 \text{ on } \Sigma_i, \\
v & = 0 \text{ on } \{|x| = 1, x_1 \geqslant 1\} .
\end{align}
  By Schauder estimates \eqref{schauder global}, we have that
  \begin{equation}
    \|v_1 - v_2 \|_{C^{2, \alpha} (\bar{B}_{+, +})} \leqslant C \|[L_t -
    L_{t_0}] (z_1 - z_2)\|_{C^{\alpha} (\bar{B}_{+, +})} .
  \end{equation}
  Note that
  \begin{equation}
    \|[L_t - L_{t_0}] (z_1 - z_2)\|_{C^{\alpha} (\bar{B}_{+, +})} \leqslant C
    |t - t_0 |  \|z\|_{C^{2, \alpha} (\bar{B}_{+, +})} .
  \end{equation}
  We choose $\varepsilon > 0$ so small that we have
  \begin{equation}
    \|v_1 - v_2 \|_{C^{2, \alpha} (\bar{B}_{+, +})} \leqslant \tfrac{1}{2}
    \|z_1 - z_2 \|_{C^{2, \alpha} (\bar{B}_{+, +})} .
  \end{equation}
  Applying the contraction mapping principle, there exists a unique $u \in
  C^{2, \alpha} (\bar{B}_{+, +})$ such that
  \begin{equation}
    u = A u \in C^{2, \alpha} (\bar{B}_{+, +})
  \end{equation}
  obtaining the openness of $S$.
\end{proof}

\subsection{Existence of $C^{2, \alpha}$ solutions on a cube}

We are now ready to show that the solution on a cube is in $C^{2, \alpha}
(\bar{Q})$. We use a technique from Chapter 3, Section 1 of
{\cite{ladyzhenskaya-linear-1968}} (see also {\cite[Lemma
6.10]{gilbarg-elliptic-1983}}).

Consider a point $x_0 \in \partial Q$, $x_0$ lies at an open edge or at the
vertex. Then near $x_0$, the problem is either mixed Neumann-Neumann, mixed
Dirichlet-Neumann or mixed Dirichlet-Neumann-Neumann type. We assume that
$x_0$ is a vertex at the top face $T$ and will show the following theorem.
Other cases is similar.

\begin{theorem}
  Near $x_0$ the solution $u$ to \eqref{general mvp} is $C^{2, \alpha}$ up to
  the vertices.
\end{theorem}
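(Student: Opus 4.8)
The plan is to localize at $x_0$, flatten by the special coordinates of Lemma~\ref{general fermi} the three faces $T,F_1,F_2$ meeting there, and then eliminate the Dirichlet face $T$ by an odd reflection, so that the Dirichlet--Neumann--Neumann vertex becomes a Neumann--Neumann edge point, which is exactly the configuration already treated on the model quarter ball in Lemma~\ref{mvp L}. First, subtracting $u'$ from the hypothesis of Theorem~\ref{general existence} and then an explicit $C^{2,\alpha}$ corrector (of the shape $\chi(z)\,z_3^{2}f(z',0)/g^{33}(z',0)$ near $T$, cut off and adjusted near $F_1,F_2$ so as not to disturb the homogeneous boundary data) reduces to $\varphi\equiv0$ on $T\cup B$, $\psi\equiv0$ on $F$ and $Lu=f$ with $f\in C^{\alpha}(\bar Q)$ vanishing on $T$. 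By Lemma~\ref{general fermi} there are coordinates $\{z_j\}$ near $x_0$ with $T=\{z_3=0\}$, $F_1=\{z_1=0\}$, $F_2=\{z_2=0\}$ and \eqref{crucial boundary condition} on each of these faces, and a neighbourhood of $x_0$ in $Q$ becomes a neighbourhood of the origin in the octant $\{z_1,z_2,z_3>0\}$.

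I then reflect across $T$: put $\tilde u(z',z_3)=-u(z',-z_3)$ and extend the coefficients of $L$ and the datum $f$ through $R(z',z_3)=(z',-z_3)$, evenly in the index pairs not involving $3$ and in $(33)$, oddly in the pairs with exactly one index equal to $3$. The key point is that \eqref{crucial boundary condition} with $i=3$ makes $g^{13}=g^{23}=0$ on $T$, so their odd extensions are Lipschitz across $\{z_3=0\}$; $g^{11},g^{22},g^{12},g^{33}$ and $\sqrt g$ extend evenly and stay $C^{\alpha}$, keeping \eqref{crucial boundary condition} on $\{z_1=0\},\{z_2=0\}$ since those are even conditions in $z_3$; and $\tilde f\in C^{\alpha}$ because $f$ was arranged to vanish on $T$. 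The homogeneous Neumann conditions on $F_1,F_2$ are even in $z_3$ and so pass to $\tilde u$. Hence $\tilde u$ solves a uniformly elliptic equation with $C^{\alpha}$ coefficients in a neighbourhood of $x_0$ inside the wedge $\{z_1>0,z_2>0\}$, with homogeneous Neumann data on its two flat faces; and the $\pi/2$ dihedral angle --- which is precisely what makes $\{z_1=0\}$ and $\{z_2=0\}$ orthogonal coordinate planes with $g^{12}=0$ on them --- furnishes the compatibility of the two Neumann conditions required by the model.

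Next I transfer this to the model quarter ball: taking the radius small enough that the spherical part of the boundary lies in the interior of the reflected region, map the neighbourhood diffeomorphically onto $B_{+,+}$, carrying $\{z_i=0\}$ to $\Sigma_i$ and, again by Lemma~\ref{general fermi}, arranging $g^{11}=1$, $g^{12}=g^{13}=0$ on $\Sigma_1$, $g^{22}=1$, $g^{21}=g^{23}=0$ on $\Sigma_2$ and the $\pi/2$ contact angle of the spherical face; on the spherical face prescribe the Dirichlet datum obtained by restricting $u$, which is smooth there. The model-domain solvability, Lemma~\ref{mvp L} (its a priori $C^{2,\alpha}$ estimate, and the Liouville/reflection argument of Lemma~\ref{simon lemma} behind it, use only the $C^{\alpha}$ regularity of the principal part that survives the reflection), produces a solution $v\in C^{2,\alpha}(\bar{B}_{+,+})$, and the comparison principle for the mixed boundary value problem --- from the Hopf-type Lemma~\ref{hopf maximum principle} and the maximum-principle discussion following it --- identifies $v$ with the transported $\tilde u$. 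Undoing the diffeomorphism and the reflection gives $u\in C^{2,\alpha}$ near $x_0$ in $\bar Q$. The other cases --- a bottom vertex (reflect across $B$, then the quarter ball), a point on a top or bottom edge (reflect across the Dirichlet face, then the half ball of Lemma~\ref{half ball mvp solvability}), a point on a vertical edge (the quarter ball directly) --- are entirely analogous, and patching the resulting local statements with interior Schauder estimates and the boundary estimates away from $T$ (Lemma~\ref{c2a estimate}, Proposition~\ref{schauder}) yields $u\in C^{2,\alpha}(\bar Q)$, which proves Theorem~\ref{general existence}.

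The main obstacle I foresee is the reflection across the Dirichlet face: one must verify that the reflected operator and right-hand side are genuinely $C^{\alpha}$ up to $\{z_3=0\}$, and this is exactly where the flatness of $T$ and the identity $g^{i3}=0$ on $T$ from Lemma~\ref{general fermi}, together with the preliminary normalization $f|_T=0$, are indispensable. It is also where the $\pi/2$ dihedral hypothesis enters in an essential way --- without it the two reflected side faces meet at an angle $\neq\pi/2$ and are not the coordinate hyperplanes of the model quarter ball, so Lemma~\ref{mvp L} is no longer available and the reduction fails.
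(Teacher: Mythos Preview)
Your argument is essentially correct but follows a genuinely different route from the paper. The paper does \emph{not} reflect across the Dirichlet face $T$. Instead it (i) obtains a preliminary $u\in C^{\mu}(\bar Q)\cap C^{2,\alpha}(Q)$ by evenly reflecting $u'-u$ twice across the two Neumann side faces and applying weak Dirichlet theory; (ii) uses a coordinate chart near $x_0$ that sends the piece of $T$ to the \emph{spherical} face of $\partial B_{+,+}$ and $F_1,F_2$ to $\Sigma_1,\Sigma_2$; and (iii) upgrades to $C^{2,\alpha}$ by approximating the boundary data $f_i$ by $f_i^{(k)}$, solving the model problem on $B_{+,+}$ for each $k$ via Lemma~\ref{mvp L}, and passing to the limit with the maximum principle and Arzel\`a--Ascoli. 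Your strategy---odd reflection across $T$ to collapse the Dirichlet--Neumann--Neumann vertex to a Neumann--Neumann edge, followed by a single application of Lemma~\ref{mvp L} with the spherical face carrying interior Dirichlet data---is more economical and avoids the approximation step. What the paper's route buys is that the coefficients of $L$ are never reflected, so their $C^{\alpha}$ regularity is never in question.

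Two points in your proof deserve more care. First, the odd extension of the first-order coefficient $b^{3}$ is $C^{\alpha}$ across $\{z_3=0\}$ only if $b^{3}|_T=0$; for the Laplace--Beltrami operator one computes $b^{3}|_T=\partial_3\log\sqrt{g}\,|_T$, which is generically nonzero (it encodes the mean curvature of $T$), so the reflected operator $\tilde L$ has a merely bounded first-order part. Since the $C^{2,\alpha}$ Schauder estimate behind Lemma~\ref{mvp L} uses $b^{i}\in C^{\alpha}$, you should either normalise $b^{3}|_T=0$ by a further $C^{2,\alpha}$ subtraction (as you did for $f|_T$), or pass through $W^{2,p}$ first and then bootstrap. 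Second, the comparison identifying your model solution $v$ with the transported $\tilde u$ needs a weak uniqueness statement, since $\tilde u$ is not yet known to be $C^{2}$ up to the edge; the paper avoids this circularity by securing $C^{\mu}(\bar Q)$ in step~(i) before invoking the maximum principle.
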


\begin{proof}
  Note that $u' - u$ satisfies the homogeneous boundary condition. We can
  perform even reflection on $u' - u$ accross side faces two times (see
  {\cite{li-dihedral-2020}}), then apply the weak theory of Dirichlet boundary
  value problems. We see that $u' - u \in C^{\mu} (\bar{Q}) \cap C^{2, \alpha}
  (Q)$ for some $\mu \in (0, 1)$ using weak theory. So ${u \in C^{\mu}} 
  (\bar{Q}) \cap C^{2, \alpha} (Q)$.
  
  Similar to Lemma \ref{general fermi}, there exists a coordinate system in a
  neighborhood $\Omega$ of $x_0$ such that the a piece of the top face near
  $x_0$ is mapped into a piece of spherical boundary $\{|x| = 1, x_1 \geqslant
  0, x_2 \geqslant 0\}$ and side faces are mapped into pieces of planar
  boundaries $\{x_i = 0, |x| \leqslant 1\}$ ($i$ is 1 or 2). And the metric
  satisfies that $g^{11} = 1$, $g^{12} = g^{13} = 0$ on $\Sigma_1$ and $g^{22}
  = 1$, $g^{21} = g^{23} = 0$ on $\Sigma_2$, and the spherical piece of
  $\partial B_{+, +}$ meet other pieces $\Sigma_i$ with a constant contact
  angle $\tfrac{\pi}{2}$. This is the same condition as in Lemma \ref{mvp L}.
  
  We show that $u \in C^{2, \alpha} (\bar{Q})$ by approximation. We may now
  regard $u$ now as a solution to a problem of the form \eqref{mvp problem
  quarter ball L}. We can take a sequence of (compatible) boundary data
  $f_i^{(k)}$ such that they converge uniformly to $f_i$. Let $u^{(k)}$ be the
  solution to
  \begin{equation}
    L u_k = f \text{ in } B_{+, +}, \tfrac{\partial u}{\partial x_i} =
    f_i^{(k)} \text{ on } \Sigma_i, u = f_3^{(k)} \text{ on } \{|x| = 1, x_1
    \geqslant 1\} .
  \end{equation}
  Using the maximum principle Lemma \ref{hopf maximum principle}, we can show
  that $u_k$ converges uniformly to $u$, and by Schauder estimates
  \eqref{schauder global} $u^{(k)} \in C^{2, \alpha} (\bar{B}_{+, +})$,
  $\|u^{(k)} \|_{C^{2, \alpha} (\bar{B}_{+, +})} \leqslant C$ for some
  constant $C > 0$. Combining with Arzela theorem, we have that $u \in C^{2,
  \alpha} (\bar{B}_{+, +})$. Returning to the cube $Q$, hence we have shown
  that near $x_0$ the solution $u$ is $C^{2, \alpha}$ up to the vertices.
\end{proof}

\begin{remark}
  It might be possible that the assertion $u \in C^{\mu} (\bar{Q})$ follows
  directly from a weak theory for mixed boundary value problems. Then the
  $C^{2, \alpha} (\bar{Q})$ regularity follows directly from uniqueness.
\end{remark}

\begin{remark}
  Alternatively, we can further divide the quarter ball by half and prescribe
  a Dirichlet boundary condition on the extra neighboring face of the origin.
  Then near the origin, the boundary condition is mixed
  Dirichlet-Neumann-Neumann type. We can study the problem in a similar
  fashion as in Lemma \ref{mvp L}. The Green function of the model problem is
  obtained via one more reflection. This approach is simply longer in
  presentation and the idea is the same.
\end{remark}

\

\end{document}